\begin{document}

 \newtheorem{thm}{Theorem}[section]
 \newtheorem{coro}[thm]{Corollary}
 \newtheorem{lemma}[thm]{Lemma}{\rm}
 \newtheorem{proposition}[thm]{Proposition}

 \newtheorem{defn}[thm]{Definition}{\rm}
 \newtheorem{ass}[thm]{Assumption}
 \newtheorem{remark}[thm]{Remark}
 \newtheorem{ex}{Example}
\numberwithin{equation}{section}

\newcommand{\bbR}{\mathbb{R}}
\newcommand{\bbC}{\mathbb{C}}
\newcommand{\bbZ}{\mathbb{Z}}

\def\la{\langle}
\def\ra{\rangle}
\def\fac{{\rm !}}

\def\x{\mathbf{x}}
\def\m{\mathbf{m}}
\def\z{\mathbf{x}}
\def\p{\mathbf{p}}
\def\P{\mathbb{P}}
\def\A{\mathbb{A}}
\def\S{\mathbf{S}}
\def\h{\mathbf{h}}
\def\m{\mathbf{m}}
\def\y{\mathbf{y}}
\def\bz{\mathbf{z}}
\def\F{\mathcal{F}}
\def\R{\mathbb{R}}
\def\T{\mathbf{T}}
\def\N{\mathbb{N}}
\def\D{\mathbf{D}}
\def\V{\mathbf{V}}
\def\U{\mathbf{U}}
\def\K{\mathbf{K}}
\def\Q{\mathbf{Q}}
\def\W{\mathbf{W}}
\def\M{\mathbf{M}}
\def\oM{\overline{\mathbf{M}}}
\def\bSigma{\mathbf{\Sigma}}
\def\C{\mathbb{C}}
\def\P{\mathbb{P}}
\def\Z{\mathbb{Z}}
\def\bZ{\mathbf{Z}}
\def\H{\mathcal{H}}
\def\A{\mathbf{A}}
\def\V{\mathbf{V}}
\def\B{\mathbf{B}}
\def\c{\mathbf{C}}
\def\L{\mathcal{L}}
\def\bS{\mathbf{S}}
\def\H{\mathcal{H}}
\def\I{\mathbf{I}}
\def\Y{\mathbf{Y}}
\def\X{\mathbf{X}}
\def\G{\mathbf{G}}
\def\f{\mathbf{f}}
\def\z{\mathbf{z}}
\def\bv{\mathbf{v}}
\def\y{\mathbf{y}}
\def\d{\hat{d}}
\def\x{\mathbf{x}}
\def\b{\mathbf{b}}
\def\c{\mathbf{c}}
\def\bI{\mathbf{I}}

\def\g{\mathbf{g}}
\def\w{\mathbf{w}}
\def\b{\mathbf{b}}
\def\a{\mathbf{a}}
\def\u{\mathbf{u}}
\def\v{\mathbf{v}}
\def\q{\mathbf{q}}
\def\e{\mathbf{e}}
\def\s{\mathcal{S}}
\def\cc{\mathcal{C}}

\def\tg{\tilde{g}}
\def\tx{\tilde{\x}}
\def\tg{\tilde{g}}
\def\tA{\tilde{\A}}

\def\cX{\overline{\mathbf{X}}}
\def\bell{\boldsymbol{\ell}}
\def\bxi{\boldsymbol{\xi}}
\def\balpha{\boldsymbol{\alpha}}
\def\bbeta{\boldsymbol{\beta}}
\def\bgamma{\boldsymbol{\gamma}}
\def\eeta{\boldsymbol{\eta}}
\def\bpsi{\boldsymbol{\psi}}
\def\bsigma{\boldsymbol{\sigma}}
\def\supmu{{\rm supp}\,\mu}
\def\supp{{\rm supp}\,}
\def\cd{\mathcal{C}_d}
\def\cok{\mathcal{C}_{\K}}
\def\vol{{\rm vol}\,}
\def\om{\mathbf{\Omega}}
\def\blambda{\boldsymbol{\lambda}}
\def\btheta{\boldsymbol{\theta}}
\def\bphi{\boldsymbol{\phi}}
\def\bpsi{\boldsymbol{\psi}}
\def\bnu{\boldsymbol{\nu}}
\def\bmu{\boldsymbol{\mu}}
\def\bom{\boldsymbol{\Omega}}
\def\tM{\hat{\M}}
\def\tv{\hat{\v}}
\def\1{\mathbf{1}}
\def\d{\mathrm{d}}
\def\dx{\mathrm{dx}}
\def\dy{\mathrm{dy}}

\title[Distance to Gaussian mixtures]{Gaussian mixtures closest to a given  measure via optimal transport}

\author{Jean B Lasserre}
\address{LAAS-CNRS and Toulouse School of Economics (TSE), BP 54200, 7 Avenue du Colonel Roche, 31031 Toulouse c\'edex 4, France}
\email{lasserre@laas.fr}
\thanks{The author is supported by the AI Interdisciplinary Institute ANITI  funding through the french program
``Investing for the Future PI3A" under the grant agreement number ANR-19-PI3A-0004. This research is also part of the programme DesCartes and is supported by the National Research Foundation, Prime Minister's Office, Singapore under its Campus for Research Excellence and Technological Enterprise (CREATE) programme.}

%% For previous classifications, use \subjclass[2010]{00X99}

%% Abstract should be placed before \maketitle (and, in fact, before
%% \begin{document is best)

\begin{abstract}
Given a determinate (multivariate) probability measure $\mu$, we characterize
Gaussian mixtures $\nu_\phi$ which minimize the 
Wasserstein distance $W_2(\mu,\nu_\phi)$ to $\mu$ when 
the mixing probability measure $\phi$ on the 
parameters $(\m,\bSigma)$ of the Gaussians is supported on a compact set $S$.
(i) We first show that such mixtures are optimal solutions
of a particular optimal transport (OT) problem where 
the marginal $\nu_{\phi}$ of the OT problem is also unknown via
the mixing measure variable $\phi$.
Next (ii) by using a well-known specific property of Gaussian measures, 
this optimal transport is then viewed as a Generalized Moment Problem (GMP)
and if the set $S$ of mixture parameters $(\m,\bSigma)$ is a basic compact semi-algebraic set,
we provide a ``mesh-free" numerical scheme 
to approximate as closely as desired 
the optimal distance by solving a hierarchy of semidefinite relaxations of increasing size. In particular,
we neither assume that the mixing measure is finitely supported nor that the variance is the same for all components.
If the original measure $\mu$ is not a Gaussian mixture with parameters $(\m,\bSigma)\in S$,
then a strictly positive distance is detected 
at a finite step of the hierarchy. 
If the original measure $\mu$ is a Gaussian mixture with parameters $(\m,\bSigma)\in S$,
then all semidefinite relaxations of the hierarchy have same zero optimal value. Moreover
if the mixing measure is atomic with finite support, its components can sometimes be extracted from 
an optimal solution at some semidefinite relaxation of the hierarchy when 
Curto \& Fialkow's flatness condition  holds for  some moment matrix.\\
{\bf MSC: 42C05 47B32  33C47 90C23 90C46}
 \end{abstract}

\maketitle

\section{Introduction}

Comparing mixture distributions (e.g. their ``distance" to each other) is becoming an important topic with many real world applications, and particularly in data science. In addition, in the latter context, for model interpretability 
the mixing measure of components can be as important as the mixture distribution itself.
Quoting \cite{estim-infer}, 
\emph{``standard distances (Hellinger, Total Variation, Wasserstein)  between mixture distributions
do not capture the possibility that similar distributions may arise from mixing completely different 
mixture components, and have therefore different mixing measures"}. 
The relations between mixture distributions and their mixing measures was investigated in \cite{Nguyen}.
So for instance, in the context of \emph{topic models}, in \cite{estim-infer} the authors define what they call the Sketched Wasserstein Distance (SWD) between two mixture distributions, both of which consist of 
a finite mixing of distributions in some set of probability measures on a (Polish) space.
They show that the SWD distance equals the Wasserstein distance between the mixing measures.

Among mixture distributions, Gaussian mixtures  form an important subfamily 
because they can approximate continuous probability densities quite well. 
In particular they are used in statistics for clustering of data and 
to approximate a  large family of distributions of interest in applications; see e.g. \cite{dizioa}, \cite{marron}, \cite{yu}, \cite{mom-var}, \cite{alg-ident}, \cite{wang}, \cite{study}, \cite{Peel},\cite{Moitra}.
Mixtures of Gaussians $\mathcal{N}(\m,\bSigma)$ on $\R^d$ have the well-known and nice property that
every moment $\mu_{\balpha}=\int \x^{\balpha}d\mu$, $\balpha\in\N^d$,
is an \emph{explicit}  polynomial of degree $\vert\balpha\vert$ in the parameters $(\m,\bSigma)$ of the mixture, and therefore
determining whether a real sequence $(y_{\balpha})_{\balpha\in\N^d}$ has a representing 
measure $\mu$ which is some Gaussian mixture, has been recently investigated in e.g. \cite{mom-var,alg-ident} as a specific moment-problem in real analysis. In particular in \cite{alg-ident} the authors prove positive and negative results on 
rational identifiability\footnote{Algebraic identifiability means that there are finitely many (complex) solutions to the moment equations for generic values of the sample moments. On the other hand, rational identifiability
is about generic uniqueness of real solutions, up to the label-swapping action of the symmetric group $S_k$} of $k$-atomic mixing measures of mixture distributions; for instance if $d=1$ then for all
$k$, a $k$-atomic mixing measure can be identified from sufficiently many moments of the mixing distribution
\cite[Theorem 1]{alg-ident}. The same result for mixtures of bivariate Gaussians is a conjecture \cite[Conjecture 2]{alg-ident}; see also \cite{Lindsay} on the key role of moment matrices and determinants in the method of moments.

On the other hand, an important problem  in robust statistics is to estimate parameters of Gaussian mixtures 
from their samples (possibly with noisy data).
In  contributions \cite{valiant-1,kothari-1,kothari-2} from the theoretical computer science community, (theoretical) polynomial time algorithms 
(e.g. sum-of-squares algorithms) have been proposed for efficient learning of mixtures with asymptotic guarantees. In the recent contribution \cite{Yihong}, 
a practical algorithm for optimal estimation of mixtures of finitely many univariate Gaussians with 
same (known or unknown) variance is proposed via  
a (denoised) method of  moments. It combines semidefinite programming and Gauss quadratures to 
estimate a mixture of $k$ univariate Gaussians with same variance. 
In \cite{HK18} the authors consider  the estimator made of mixtures with $k$ atoms (and same variance) 
which minimizes the Kolmogorov distance of its distribution function
to that of the input distribution, and they provide optimal rates of estimation 
(the $k$-atomic mixing distributions are compared with the Wasserstein distance) but no algorithm is provided.  Again, the notion of $k$-idenfiability is of central importance in \cite{HK18}.

In this paper we consider the following problem: Given a probability measure $\mu$ on $\R^d$, and a compact set $S$
of parameters $(\m,\bSigma)$, find a 
mixture $\nu$ of Gaussian measures $\mathcal{N}(\m,\bSigma)$ with 
with parameters $(\m,\bSigma)\in S$, which is the closest to $\mu$.
How close is $\nu$ to $\mu$ is measured e.g. by the $2$-Wasserstein (or Kantorovich) distance $W_2(\mu,\nu)$. That is,
for all $B\in\mathcal{B}(\R^d)$,
\[\nu(B)\,=\,\int_S\left(\frac{1}{(2\pi)^{d/2}\sqrt{\mathrm{det}(\bSigma)}}\int_B\exp(-(\x-\m)^T\bSigma^{-1}(\x-\m)/2)\,\d\x\right)\,\d\phi(\m,\bSigma)\,\]
for some probability $\phi$ on $S$ (the mixing measure of parameters $(\m,\bSigma)\in S$), and 
\[W_2(\mu,\nu)^2\,=\,\inf_{\lambda}\,\{\, \int_{\R^{2d}}\Vert \x-\y\Vert^2\,\d\lambda(\x,\y): \:\lambda_{\x}=\mu\,;\:\lambda_{\y}=\nu\,\}\,,\]
where $\lambda$ is a probability measure on $\R^{2d}$, and $\lambda_{\x}$ (resp. $\lambda_{\y}$) denotes the marginal of $\lambda$ w.r.t. $\x$ (resp. w.r.t. $\y$).
In fact, the results and proposed methodology are also valid 
if one uses the $1$-Wasserstein distance $W_1$ instead of $W_2$. 

\subsection*{Statement of the problem and contribution}
For sake of clarity and simplicity of exposition, we first restrict to the univariate case. Then we briefly describe extension to the multivariate case. While this extension does not pose any theoretical problem, on the other hand the associated numerical scheme is more demanding (simply for question of scalability of the approach).

\paragraph{Statement of the problem} Let $\mathscr{P}(\mathcal{X})$ denote the space of probability measures on a Borel set $\mathcal{X}\subset\R^2$. With $\R_{+}:=\{x: x\geq0\}$, let $S\subset\R\times\R_{+}$ be a set of parameters $(m,\sigma)$ for univariate Gaussian measures
$\mathcal{N}(m,\sigma)$, and let 
$\bmu=(\mu_j)_{j\in\N}$ be the moment sequence of a given probability measure 
$\mu$ on the real line.
The goal is to find a Gaussian mixture $\nu$ with mixing parameters in $S$
that is the closest to $\mu$ with respect to the Wasserstein distance 
\begin{equation}
\label{def-distance-0}
W_2(\mu,\nu)^2\,=\,
\displaystyle\min_{\lambda\in\mathscr{P}(\R^2)}\{\,\int_{\R^2}(x-y)^2\,\d\lambda(x,y):\:
\lambda_x\,=\,\mu\,;\: \lambda_y\,=\,\nu\,\}\,,
\end{equation}
where $\lambda_x$ (resp. $\lambda_y$) is the marginal of $\lambda$ w.r.t. $x$ (resp. w.r.t. $y$) on $\R$. Alternatively one may also use
the Wasserstein distance $W_1(\mu,\nu)=\int \vert x-y\vert\,\d\lambda$ (see Appendix).

As $\nu$ is required to be a Gaussian mixture, it is associated 
with some (not necessarily unique) \emph{mixing} probability  measure
$\phi$ on the set $S$ of Gaussian parameters $(m,\sigma)$, and therefore 
$\nu$ is in fact denoted by $\nu_\phi$,  and reads
\begin{equation}
\label{intro-mixture}
\nu_\phi(B)\,:=\,\int_{S}\left(\frac{1}{\sqrt{2\pi}\sigma}\int_{B} 
\exp(\frac{-(x-m)^2}{2\sigma^2})\,\dx\right)\,\d\phi(m,\sigma)\,,\quad\forall B\in\mathcal{B}(\R)\,.
\end{equation}
Equivalently, $\nu_\phi$ has the density
\[x\mapsto \int_S \frac{1}{\sqrt{2\pi}\sigma}\,
\exp(\frac{-(x-m)^2}{2\sigma^2})\,\d\phi(m,\sigma)\,,\]
w.r.t. Lebesgue measure on $\R$.
Therefore one wishes to solve the optimization problem
\begin{equation}
\label{OT-1}
\tau\,=\,\displaystyle\inf_{\phi\in\mathscr{P}(S)}W_2(\mu,\nu_\phi)^2\,=\,
\displaystyle\inf_{\,\phi\in\mathscr{P}(S),\lambda\in\mathscr{P}(\R^2)}\,\{\,\displaystyle\int (x-y)^2\,\d\lambda(x,y): \:\lambda_x\,=\,\mu\,;\:\lambda_y\,=\,\nu_\phi\}\,.
\end{equation}
Observe that \eqref{OT-1} is an optimal transport problem of a particular type. Indeed 
the second marginal $\lambda_y=\nu_\phi$ of the unknown $\lambda$ is also to be optimized via the
(mixing measure) variable $\phi$ on $S$.

\paragraph{Contribution}

We assume that the set of parameters $S\subset\R\times\R_{+}$ is compact. In contrast to previous works
we do \emph{not} assume that the mixing measure is $k$-atomic (and not even with same variance for all components). Also our algorithm is potentially and directly applicable to mixtures of multivariate Gaussians, although of course its efficiency 
strongly depends on the dimension. At last,
the input probability measure $\mu$ is not necessarily a Gaussian mixture and our
primary goal is to evaluate how far is $\mu$ from a mixture of Gaussians
with parameters $(m,\sigma)$ in a given set $S$. If $\mu$ is indeed such a Gaussian mixture then 
the algorithm helps to detect an associated mixing measure.

I. We first show that if $\mu$ satisfies
\begin{equation}
\label{suff-cond}
\int \exp(c\,\vert x\vert)\,\d\mu(x)\,<\,\infty\,,\end{equation}
for some scalar $c>0$, then \eqref{OT-1} has an optimal solution 
$(\lambda^*,\phi^*)\in\mathscr{P}(\R^2)\times\mathscr{P}(S)$ (i.e., $\tau=W_2(\mu,\nu_{\phi^*})^2$).
Moreover, introducing the moment sequences $\blambda^*=(\lambda^*_{(i,j)})_{(i,j)}$
and $\bphi^*=(\phi^*_{(i,j)})_{(i,j)}$, with
\[\lambda^*_{(i,j)}=\int x^i y^j\,\d\lambda\,,\quad\phi^*_{(i,j)}\,=\,\int m^i \sigma^j\,\d\phi^*\,,\quad\forall (i,j)\in\N^2\,,\]
the couple $(\lambda^*,\phi^*)$ is also  an optimal solution of:
\begin{equation}
\label{OT-moments}
\displaystyle\inf_{\lambda\in\mathscr{P}(\R^2),\phi\in\mathscr{P}(S)}\,\{\,\displaystyle\int (x-y)^2\,\d\lambda: \:
\lambda_{(j,0)}=\mu_j\,;\quad\lambda_{(0,j)}=\displaystyle\int p_j(m,\sigma)\,\d\phi\,,\quad\forall j\in\N\,\}\,,
\end{equation}
which is an \emph{exact  moment-relaxation} of \eqref{OT-1}. To show that \eqref{OT-moments} is equivalent 
to \eqref{OT-1}, one exploits that (i) $S$ is compact,
(ii) the well-known fact that every moment $\mu_j$ of a Gaussian measure $\mu=\mathcal{N}(m,\sigma)$ is an explicit  polynomial $p_j\in\R[m,\sigma]$ of degree $j$, and (iii) that $\mu$ is moment determinate (because of \eqref{suff-cond}). To the best of our knowledge, this is the first characterization of best
Wasserstein-approximations by Gaussian mixtures 
(with parameters in a given set $S$) as \emph{optimal solutions of an optimal transport problem}.

We also obtain that strong duality holds between \eqref{OT-moments} and its dual which reads
\begin{equation}
\label{intro-dual}
\begin{array}{rl}
 \displaystyle\sup_{q\in\R[x],g\in\R[y]}&\{\, \displaystyle\int q\,\d\mu:\:
 q(x)+g(y)\,\leq\,(x-y)^2\,,\:\forall x,y\,;\\
 &\frac{1}{\sqrt{2\pi}\sigma}\displaystyle\int g(x)\,\exp(\frac{-(x-m)^2}{2\sigma^2})\,\dx\,\geq\,0\,,\quad\forall (m,\sigma)\in S\,\}\,,
  \end{array}
\end{equation}
and is very close in spirit to the classical dual of the Monge-Kantorovich optimal transport (with cost $\Vert \x-\y\Vert^2$). 

II. Next, the exact moment formulation \eqref{OT-moments} of
\eqref{OT-1} is a particular instance 
of the ``Generalized Moment Problem" (GMP) (see e.g. \cite{CUP}) whose description is 
trough algebraic data only (because every moment of a Gaussian is a 
\emph{polynomial} in the parameters $(m,\sigma)$).
Therefore one can apply the \emph{Moment-SOS hierarchy} \cite{CUP,HKL} to solve \eqref{OT-moments}. That is,
the optimal value $\tau$ of \eqref{OT-moments} (hence of \eqref{OT-1} as well) can be approximated as closely as desired by solving a sequence (a hierarchy) of semidefinite relaxations of increasing size (as more and more moments are taken into account). 

The degree-$n$ semidefinite relaxation of \eqref{OT-1} (and of \eqref{OT-moments}) is just \eqref{OT-moments} where $\phi\in\mathscr{P}(S)$ and $\lambda\in\mathscr{P}(\R^2)$ are respectively replaced with degree-$2n$ pseudo-moment sequences
$\bphi=\phi_{(i,j)})_{(i,j)\in\N^2_{2n}}$ and $\blambda=(\lambda_{(i,j)})_{(i,j)\in\N^2_{2n}}$, that satisfy necessary semidefinite constraints to be moments of a measure on $S$ and $\R^2$ respectively, coming from Putinar's Positivstellensatz \cite{putinar,CUP}.

If the input  measure is not a mixture of Gaussians 
with parameters $(m,\sigma)\in S$, then the optimal value becomes strictly positive at some step of the hierarchy, 
which provides a certificate that $\mu$ cannot be a mixture of Gaussians with parameters $(m,\sigma)\in S$
(i.e., of the form \eqref{intro-mixture}).

III. On the other hand, if the input  measure $\mu$ \emph{is} a mixture of finitely many 
Gaussian measures with parameters $(m,\sigma)\in S$,  
then $\tau=0$, $\lambda^*=\mu\otimes\mu$, and $\phi^*$ is 
an atomic mixing measure (not necessarily unique) with finite support. If a certain rank condition 
(Curto \& Fialkow's flat extension in \cite[Theorem 3.11]{CUP}) is satisfied at an optimal solution $(\hat{\blambda},\hat{\bphi})$ of some degree-$n$ relaxation in the hierarchy (with optimal value zero), then 
the support and weights of some atomic measure $\hat{\phi}$ on $S$ can be recovered from $\hat{\bphi}$. 
To check whether $\hat{\phi}$ is optimal for \eqref{OT-moments} (and $\hat{\phi}=\phi^*$ and $\phi^*$ is unique)
can be done by checking whether all moments of $\nu_{\hat{\phi}}$ of degree 
higher than $n+1$ match those of $\mu$, i.e., whether
\begin{equation}
\label{check}
\mu_j\,=\,\int p_j(m,\sigma)\,\d\hat{\phi}(m,\sigma)\,,\quad \forall j\,>\,n+1\,.\end{equation}
Checking \eqref{check} for each fixed $j>n+1$ is easy and can be done exactly.

We recall  that identifiability of the mixing measure from moments of the mixture distribution 
is a delicate issue \cite{alg-ident} as in general, several mixing measures can be solutions. However in our setting
we have the additional condition that the mixing mesure is supported on $S$.

Again we emphasize our minimal assumptions: the input measure $\mu$ satisfies 
\eqref{suff-cond} and the parameter set $S$ of admissible mixtures of Gaussians is a compact basic semi-algebraic set.
In particular and in contrast to \cite{Yihong}, the variance $\sigma$ is not fixed and the mixing measures are not 
assumed to be atomic with finite support.

The paper closest in spirit  to ours is the practical algorithm \cite{Yihong} for mixtures $\mu$ of $k$ \emph{univariate} Gaussian measures with \emph{same} variance $\sigma$ (both cases where $\sigma$ is known and unknown are considered in \cite{Yihong}). The author first estimates a vector %$\tilde{\m}$ 
of $2k-1$ moments of $\mu$ via Hermite polynomials, then denoises this vector by projection onto the moment space (via semidefinite programming), and then obtains a resulting $k$-atomic distribution via Gauss quadrature. Nice results in \cite[Theorem 1; (8)]{Yihong} provide optimal rates (with respect to Wasserstein distance $W_1$) provided that $k$ and $\sigma$ are known, and \cite[Theorem 1; (9)-(10)]{Yihong}
if $k$ is known whereas $\sigma$ is unknown.  In \cite{Yihong} 
the semidefinite program is used to "denoise" the input vector of moments by projection onto the moment space. 
The Wasserstein distance is only used to quantify  \emph{\`a posteriori} the error and justify convergence. In our approach, the semidefinite relaxation (i) 
models directly the Wasserstein distance $W_2$  (using $W_1$ is also possible) 
between the input measure and any Gaussian mixture $\nu_\phi$, and (ii)
is parametrized by the number of moments considered.
Finally, notice that the approach in \cite{Yihong} 
is possible thanks to very specific features that are proper to the univariate case only. Namely:

-  (convex) semidefinite programming constraints (exploited in \cite{Yihong}) provide \emph{necessary and sufficient} conditions for a finite real sequence to have a representing measure 
and so the  output of the semidefinite program 
in \cite{Yihong} is a true moment sequence; but similar conditions are only necessary in the multivariate setting.

- similarly, Gauss quadratures also exploited in \cite{Yihong} do not always exist  in the multivariate setting (then called Gauss cubatures); see e.g. \cite{dunkl,cubature-1,cubature-2}.\\

For ease and clarity of exposition, we concentrate in the univariate case but all results of Section \ref{univariate}
are also extended to the multivariate case
which is briefly addressed in Section \ref{multivariate}.

\section{Notation, definitions and preliminary results}

\subsection{Notation and definitions}

Let $\R[x,y]$ denote the ring of real polynomials in the two variables $(x,y)$ and $\R[x,y]_n\subset\R[x,y]$ be its subset 
of polynomials of total degree at most $n$. 
Let $\N^2_n:=\{(i,j)\in\N^2: i+j\vert\leq n\}$
with cardinal $s(n)={n+2\choose 2}$. Let $\v_n(x,y)=(x^i y^j)_{(i,j)\in\N^2_n}$ 
be the vector of monomials up to degree $n$, 
and let $\Sigma[x,y]_n\subset\R[x,y]_{2n}$ be the convex cone of polynomials of total degree at most $2n$ which are sum-of-squares (in short SOS).  A polynomial $p\in\R[x,y]_n$ can be identified with its vector of
coefficients $\p=(p_{(i,j)})\in\R^{s(n)}$ in the monomial basis, and reads
\[(x,y)\quad\mapsto p(x,y)\,:=\,\langle\p,\v_n(x,y)\rangle\,,\quad \forall p\in\R[x,y]\,.\]
With $\mathcal{X}\subset\R^2$, denote by 
$\mathscr{M}(\mathcal{X})_+$ (resp. $\mathscr{C}(\mathcal{X})$), 
the space of positive measures (resp. continuous functions) on $\mathcal{X}$, and by
$\mathscr{P}(\mathcal{X})$, the space of probability measures on $\mathcal{X}$.

For a real symmetric matrix 
$\A=\A^T$, the notation $\A\succeq0$ (resp. $\A\succ0$) stands for $\A$ is positive semidefinite (p.s.d.) (resp. positive definite (p.d.)).
The support of a Borel measure $\mu$ on $\R^2$ is the smallest closed set $A$ such that
$\mu(\R^2\setminus A)=0$, and such a  set $A$ is unique.
A Borel measure with all moments finite is said to be (moment) \emph{determinate} if there is no other measure with same moments.

\paragraph{Riesz functional, moment and localizing matrix.} 
With a real sequence $\bphi=(\phi_{(i,j)})_{(i,j)\in\N^2}$  
(in bold) is associated the \emph{Riesz} linear functional $\phi\in \R[x,y]^*$  (not in bold) defined by
\[p\:(=\sum_{(i,j)}p_{i,j}x^i y^j)\quad \mapsto \phi(p)\,=\,\langle\bphi,\p\rangle\,=\,
\sum_{\balpha}p_{i,j}\,\phi_{(i,j)}\,,\quad\forall p\in\R[x,y]\,,\]
and the moment matrix $\M_n(\bphi)$
with rows and columns indexed by $\N^2_n$ (hence of size $s(n)$), and with entries
\[\M_n(\bphi)((i,j),(i',j'))\,:=\,\phi(x^{i+i'}y^{j+j'})\,=\,\phi_{(i+i',j+j')}\,,\quad (i,j),\,(i',j')\in\N^2_n\,.\]
Similarly, given $g\in\R[x,y]$ ( $(x,y)\mapsto \sum_{(i,j)} g_{i,j}x^i y^j$),
define the new sequence 
\[g\cdot\bphi\,:=\,(\sum_{(k,\ell)} g_{k,\ell}\,\phi_{(i,j)+(k,\ell)})_{(i,j)\in\N^2}\,,\]
and the localizing matrix associated with $\bphi$ and  $g$,
\[\M_n(g\cdot\bphi)((i,j),(i',j'))\,:=\,\sum_{(k,\ell)}g_{k,\ell}\,\phi_{(i+i'+k,j+j'+\ell)}\,,
\quad (i,j),(i',j')\in\N^2_n\,.\]
Equivalently, $\M_n(g\cdot\bphi)$ is the moment matrix associated with the new sequence 
$g\cdot\bphi$. The Riesz linear functional $g\cdot\phi$ associated with
the sequence $g\cdot\bphi$ satisfies
\[g\cdot\phi(p)\,=\,\phi(g\,p)\,,\quad\forall p\in\R[x,y]\,.\]
A  real sequence $\bphi=(\phi_{(i,j)})_{(i,j)\in\N^2}$ has a representing mesure 
if its associated linear functional $\phi$ is a Borel measure on $\R^2$. In this case 
$\M_n(\bphi)\succeq0$ for all $n$; the converse is not true in  general.
In addition, if $\phi$ is supported on  the set $\{\,(x,y)\in\R^2: g(x,y)\geq0\,\}$ then 
$\M_n(g\cdot\bphi)\succeq0$ for all $n$.

\paragraph{Multivariate Carleman condition}
The following condition due to Carleman in the univariate case and later extended by Nussbaum to the multivariate case, is a very useful sufficient condition to ensure that a moment sequence has a representing measure;
see e.g. \cite[Theorem 3.13]{CUP}. We here specialize to the $2$-dimensional case.
\begin{thm}[Bivariate Carleman condition]
\label{th-carleman}
Let $\bphi=(\phi_{(i,j)})_{(i,j)\in\N^2}$ be a real sequence such that $\M_n(\bphi)\succeq0$ for all $n$, and such that
\begin{equation}
\label{th-carleman-1}
\sum_{j=1}^\infty (\phi_{(2j,0)})^{-1/2j}\,=\,+\infty\,;\quad 
\sum_{j=1}^\infty (\phi_{(0,2j)})^{-1/2j}\,=\,+\infty\,.
\end{equation}
Then $\bphi$ has a representing measure $\phi$ on $\R^2$ and $\phi$ is moment determinate.
\end{thm}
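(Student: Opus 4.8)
The plan is to route the two algebraic hypotheses through operator theory, in the standard way that turns a positive functional into a Hilbert space carrying (generally unbounded) self-adjoint multiplication operators, and then to read off the representing measure as the joint spectral measure of that pair; the two coordinate Carleman conditions are exactly what forces essential self-adjointness in each variable and strong commutativity of the closures. Since $\M_n(\bphi)\succeq0$ for every $n$, the Riesz functional is positive: $\phi(p^2)=\langle\p,\M_{\deg p}(\bphi)\,\p\rangle\ge0$ for all $p\in\R[x,y]$, so $(p,q)\mapsto\phi(pq)$ is a positive semidefinite bilinear form on $\R[x,y]$. Dividing out its kernel and completing yields a Hilbert space $H$ with a dense subspace $D=\{[p]:p\in\R[x,y]\}$ of polynomial classes, and multiplication by $x$ and by $y$ give symmetric operators $X,Y$ on $D$ with $XD\subseteq D$, $YD\subseteq D$, $XY=YX$ on $D$, and $\langle\mathbf{1},X^iY^j\mathbf{1}\rangle=\phi_{(i,j)}$ with $\mathbf{1}:=[1]$; after scaling one may assume $\phi_{(0,0)}=1$ (if $\phi_{(0,0)}=0$ then $\phi\equiv0$ by Cauchy--Schwarz and the zero measure does the job).

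Next I would show that $X$, and symmetrically $Y$, is essentially self-adjoint by exhibiting a total set of quasi-analytic vectors. For $p\in\R[x,y]$, Cauchy--Schwarz for the form $\phi(\cdot\,\cdot)$ gives $\|X^n[p]\|^2=\phi(x^{2n}p^2)\le\phi_{(4n,0)}^{1/2}\,\phi(p^4)^{1/2}$, hence $\sum_n\|X^n[p]\|^{-1/n}=+\infty$ as soon as $\sum_n\phi_{(4n,0)}^{-1/4n}=+\infty$ (the factor $\phi(p^4)^{-1/4n}\to1$ is harmless, and if some $X^n[p]=0$ the vector is trivially quasi-analytic). Finally, $\sum_n\phi_{(4n,0)}^{-1/4n}=+\infty$ follows from the hypothesis $\sum_j\phi_{(2j,0)}^{-1/2j}=+\infty$: the even moments $(\phi_{(2j,0)})_j$ are log-convex (Cauchy--Schwarz, $\phi_{(2j,0)}^2\le\phi_{(2j-2,0)}\phi_{(2j+2,0)}$), so with $\phi_{(0,0)}=1$ the sequence $j\mapsto\phi_{(2j,0)}^{-1/2j}$ is nonincreasing, and a nonincreasing divergent series still diverges along its even-indexed subsequence. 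Thus every vector of the dense subspace $D$ is quasi-analytic for $X$, and Nussbaum's criterion yields that $X$ is essentially self-adjoint; write $A=\overline X$, $B=\overline Y$.

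Now $A$ and $B$ are self-adjoint with the common dense invariant core $D$, they commute on $D$, and---crucially---the estimate just made applies verbatim in each variable, so every $[p]\in D$ is quasi-analytic for $A$ \emph{and} for $B$. By the commutativity theorem for symmetric operators possessing a total set of common quasi-analytic vectors on an invariant domain (Nussbaum's quasi-analytic strengthening of Nelson's commutativity theorem), the spectral projections of $A$ and $B$ commute, so $(A,B)$ has a joint spectral measure $E$ on $\R^2$. The finite positive measure $\mu:=\langle\mathbf{1},E(\cdot)\mathbf{1}\rangle$ then has moments $\int x^iy^j\,\d\mu=\langle\mathbf{1},A^iB^j\mathbf{1}\rangle=\phi_{(i,j)}$, so it represents $\bphi$. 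For determinacy, let $\nu$ be any representing measure: then $\R[x,y]$ embeds isometrically into $L^2(\nu)$ with the same inner product $\phi(pq)$, multiplication by $x$ (resp.\ $y$) on this copy of $D$ has the same quasi-analytic vectors, hence is essentially self-adjoint, so its closure is the full self-adjoint multiplication operator on $L^2(\nu)$ and $D$ is a core for it; transporting the joint spectral measure through the isometry $H\hookrightarrow L^2(\nu)$ identifies it with $E$, and evaluating at $\mathbf{1}$ gives $\nu(\cdot)=\langle\mathbf{1},E(\cdot)\mathbf{1}\rangle=\mu(\cdot)$. Hence $\nu=\mu$ and $\bphi$ is determinate.

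The only step that is not bookkeeping is the strong commutativity of $A$ and $B$. This is genuinely delicate: essential self-adjointness of $A$ and of $B$ separately, together with $AB=BA$ on a common dense invariant core, does \emph{not} by itself force the spectral projections of $A$ and $B$ to commute---this is Nelson's classical counterexample---so the argument cannot be a mere induction on the dimension, and one must use both one-dimensional Carleman conditions \emph{jointly}, through the observation that they furnish vectors that are quasi-analytic for \emph{both} coordinate operators at once. That is where I expect the real content (and the appeal to Nussbaum's theorem) to sit; everything else---the Hilbert space construction, the Cauchy--Schwarz plus log-convexity estimate, and the transport of a spectral measure through an isometry---is routine, the only minor point being the verification above that the coordinate Carleman condition survives passing to the subsequence $(\phi_{(4n,0)})_n$ forced by the Cauchy--Schwarz bound.
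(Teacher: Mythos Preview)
The paper does not supply a proof of this statement; it is recorded as a classical preliminary result (Carleman in one variable, extended by Nussbaum to several variables) and simply referenced to \cite[Theorem~3.13]{CUP}. What you have written is precisely the standard operator-theoretic proof due to Nussbaum, and its main line is correct. In particular you put your finger on the genuine content: separate essential self-adjointness of $X$ and $Y$ on a common invariant core where they commute is \emph{not} sufficient for strong commutativity of the closures (Nelson's example), so the two coordinate Carleman hypotheses must be used jointly, via a common dense set of vectors quasi-analytic for \emph{both} operators, before Nussbaum's commutativity theorem applies. Your reduction of $\sum_j\phi_{(2j,0)}^{-1/2j}=\infty$ to $\sum_n\phi_{(4n,0)}^{-1/4n}=\infty$ via log-convexity of the even moments is also correct.

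The one place that deserves tightening is the determinacy paragraph. When you assert that for a competing representing measure $\nu$ the closure of $M_x|_D$ in $L^2(\nu)$ ``is the full self-adjoint multiplication operator on $L^2(\nu)$ and $D$ is a core for it'', you are implicitly using that $D=\R[x,y]$ is \emph{dense} in $L^2(\nu)$: Nussbaum's criterion only yields essential self-adjointness on the closed linear span of the quasi-analytic vectors, which here is $\overline{D}$, not a priori all of $L^2(\nu)$. Density of polynomials in $L^2(\nu)$ does follow from the coordinate Carleman conditions (via the one-dimensional fact that a determinate Hamburger measure has dense polynomials in $L^2$, plus an approximation argument), but it is a separate lemma and should be stated rather than absorbed into the phrase ``transporting the joint spectral measure through the isometry''.
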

For instance, if $\phi$ is a finite Borel measure on $\R^2$ and
$\sup [\int \exp(c\,\vert x\vert)\,\d\phi\,,\,\int \exp(c'\,\vert y\vert)\,\d\,\phi] <\infty$ for some scalars $c,\,c'>0$, then
the moment sequence $\bphi$ satisfies \eqref{th-carleman-1}, and $\phi$ is moment determinate.

\subsection{An intermediate result}
%Let $p_j\in\R[\mu,\sigma]$ be 
The following result is well-known and is reproduced for sake of clarity.
\begin{proposition}
\label{prop1}
If $\sigma>0$ then for every $j\in\N$, the 
moment
\begin{equation}
\label{def-p_j}
(m,\sigma)\mapsto 
\frac{1}{\sqrt{2\pi}\sigma}\int x^j\,\exp\frac{-(x-m)^2}{2\sigma^2}\,\dx\,,\end{equation}
is a polynomial $p_j\in\R[m,\sigma]$ of total degree at most $j$, and:
\begin{equation}
\label{lem1-1}
p_{2j}(m,\sigma)\,=\,  \sum_{k=0}^j (2k-1)\mathrm{!!}\,\sigma^{2k}\,m^{2(j-k)}\,{2j\choose 2k}\,,\quad\forall j\in\N\,.
\end{equation}
Moreover, if $\sigma=0$ then 
\begin{equation}
\label{lem1-2}
p_{2j}(m,0)\,=\,  m^{2j}\,=\,\int x^{2j}\,\delta_m(\dx)\,,
\quad\forall j\in\N\,.
\end{equation}
\end{proposition}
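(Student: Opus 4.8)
The plan is to reduce \eqref{def-p_j} to the moments of a standard normal by a linear change of variables, and then read off everything from the binomial theorem. Assume first $\sigma>0$. First I would substitute $x=m+\sigma t$, so that $\d x=\sigma\,\d t$ and the density $\frac{1}{\sqrt{2\pi}\sigma}\exp(-(x-m)^2/(2\sigma^2))$ becomes $\frac{1}{\sqrt{2\pi}}\exp(-t^2/2)$; the integral in \eqref{def-p_j} turns into $\frac{1}{\sqrt{2\pi}}\int_{\R}(m+\sigma t)^j\,\exp(-t^2/2)\,\d t$. Expanding $(m+\sigma t)^j=\sum_{k=0}^{j}\binom{j}{k}m^{j-k}\sigma^{k}t^{k}$ by the binomial theorem and integrating term by term (legitimate, as the sum is finite) gives
\[
p_j(m,\sigma)\,=\,\sum_{k=0}^{j}\binom{j}{k}\,m^{j-k}\,\sigma^{k}\,I_k\,,\qquad
I_k:=\frac{1}{\sqrt{2\pi}}\int_{\R}t^{k}\,\exp(-t^2/2)\,\d t\,.
\]
Since each $I_k$ is a fixed real constant, the right-hand side is visibly a polynomial in $(m,\sigma)$, and every monomial $m^{j-k}\sigma^{k}$ occurring in it has total degree $j$; hence $p_j\in\R[m,\sigma]$ has total degree at most $j$ (in fact exactly $j$, since the $k=0$ term $I_0\,m^j=m^j$ is always present). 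This settles the first assertion and identifies $p_j$.

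Next I would evaluate $I_k$. By oddness of $t\mapsto t^{k}e^{-t^2/2}$ for $k$ odd, $I_k=0$ whenever $k$ is odd, so only even powers of $\sigma$ survive in $p_j$; in particular $p_j(\cdot,\sigma)$ is an even function of $\sigma$. For even $k=2\ell$, writing $I_{2\ell}=\frac{1}{\sqrt{2\pi}}\int_{\R}t^{2\ell-1}\,(t\,e^{-t^2/2})\,\d t$ and integrating by parts yields the recursion $I_{2\ell}=(2\ell-1)\,I_{2\ell-2}$, which together with $I_0=1$ gives $I_{2\ell}=(2\ell-1)\mathrm{!!}$, the classical formula for the even moments of a standard Gaussian. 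Substituting $k=2\ell$ in the displayed sum, discarding the vanishing odd terms, and specializing to even order by replacing $j$ with $2j$ produces exactly \eqref{lem1-1}.

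Finally, for $\sigma=0$ the Gaussian $\mathcal{N}(m,0)$ is, by convention, the Dirac mass $\delta_m$, whose $2j$-th moment is $\int x^{2j}\,\delta_m(\d x)=m^{2j}$; equivalently, one simply sets $\sigma=0$ in the polynomial \eqref{lem1-1}, where every term with $k\geq1$ vanishes and only the $k=0$ term $(2\cdot0-1)\mathrm{!!}\,m^{2j}\binom{2j}{0}=m^{2j}$ remains (with the convention $(-1)\mathrm{!!}=1$), which is \eqref{lem1-2}. There is no substantive obstacle in this proof; the only points deserving a little care are the parity argument that kills the odd moments, the bookkeeping of the double factorial in the recursion for $I_{2\ell}$, and the (harmless) interchange of the finite sum with the integral.
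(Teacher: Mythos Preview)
Your proof is correct and follows essentially the same route as the paper: a linear change of variable to center the Gaussian, a binomial expansion of $x^j$, and the classical formula for the (central) Gaussian moments, followed by setting $\sigma=0$ for \eqref{lem1-2}. The only cosmetic difference is that you scale to a standard normal ($x=m+\sigma t$) and derive $I_{2\ell}=(2\ell-1)!!$ via integration by parts, whereas the paper merely shifts ($u=x-m$) and quotes the central-moment formula \eqref{ha-1} directly.
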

\begin{proof}
Recall that 
\begin{equation}
\label{ha-1}
\frac{1}{\sqrt{2\pi}\sigma}\int (x-m)^j\,\exp\frac{-(x-m)^2}{2\sigma^2}\,\dx\,=\,\left\{\begin{array}{ll}
0&\mbox{if $j$ is odd}\\
\sigma^j (j-1)\mathrm{!!}&\mbox{if $j$ is even.}\end{array}\right.\,,\quad\forall j\in\N\,,\end{equation}
with for $j\geq2$, $j\mathrm{!!}=j\,(j-2)\,(j-4)\cdots$, $1\mathrm{!!}=1$,
 and the convention $-1\mathrm{!!}=1$.
For instance, $p_0=\1$, $p_1(m,\sigma)=m$, $p_2(m,\sigma)=m^2+\sigma^2$, etc. Next, doing the change of variable $u=(x-m)$ in the integrand of \eqref{lem1-1}, expanding $(u+m)^j$ in the basis of monomials, and summing up, yields \eqref{lem1-1}.
\end{proof}
\begin{remark}
\label{rem-mixture}
(i) A Gaussian mixture is associated with  a (non necessarily unique) mixing probability $\phi\in\mathscr{P}(S)$ 
and in view of \eqref{lem1-2}, $\phi$ may 
tolerate that $\phi(\{\R\times\{0\})>0$, i.e., $\phi$ can mix
Gaussian densities with discrete measures. In other words and with a slight abuse of notation, the Dirac measure $\delta_m$ at point $m$ can be viewed a the degenerate ``Gaussian measure" $\mathcal{N}(m,0)$,
with vector of moments $(m^j)_{j\in\N}=(p_j(m,0))_{j\in\N}$.
For instance if $\mu=\sum_{k=1}^s \gamma_k\,\delta_{x_k}$ for some 
set $\{x_1,\ldots,x_k\}\subset\R$ and scalars $\gamma_k\geq0$, i.e.,
a mixture of $s$ Dirac measures with weights $(\gamma_k)$, then
\[\mu_j\,=\,\int x^j\,d\mu \,=\,\sum_{k=1}^s \gamma_k\,x_k^j\,=\,
\sum_{k=1}^s \gamma_k\,p_j(x_k,0)\,=:\,\int x^j\,\left(\sum_{k=1}^s \gamma_k\,\d\mathcal{N}(x_k,0)\,\right)\,,
\quad\forall j\in\N\,.\]
(ii) So as a consequence, if $S=[-M,M]\times [0,\overline{\sigma}]$ then 
every measure $\mu$ on $[-M,M]$ can be considered a Gaussian mixture where $\mu$ itself is 
the mixing measure. Indeed its moments $(\mu_j)_{j\in\N}$ satisfy
\[\mu_j\,=\,\int m^j\,\d\mu(m)\,=\,\int p_j(m,0)\,\d\mu(m)\,=\,\int \left(\int x^j \d\mathcal{N}(m,0)\right)\,\d\mu(m)\,,\quad j\in \N\,.\]
In particular, every discrete measure on $[-M,M]$ is also a Gaussian mixture
with parameters $(m,0)\in S$. This is not what one usually has in mind
when thinking of Gaussian mixtures, as one would expect a measure $\mu$ with a density w.r.t. 
Lebesgue measure on $\R$.  So this is why one should assume that 
the compact set $S$ satisfies $\sigma\geq\delta>0$ for all $(m,\sigma)\in S$, for some positive scalar $\delta$;
for instance, $S:=[-M,M]\times[\underline{\sigma},\overline{\sigma}]$ with $\underline{\sigma}>0$.
\end{remark}

\begin{coro}
\label{coro1}
Let $\phi$ be a probability measure on $S$. Then with $p_{2j}\in\R[m,\sigma]$, $j\in\N$, as in \eqref{lem1-1}
\begin{equation}
\label{coro1-1}
\sum_{j=1}^\infty \phi(p_{2j})^{-1/2j}\,=\,+\infty\,.
\end{equation}
\end{coro}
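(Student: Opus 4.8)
The goal is to verify the Carleman-type condition \eqref{coro1-1} for the moment sequence of $\nu_\phi$ along the $m$-axis, using that $\phi$ is supported on the compact set $S$. The plan is to get an explicit upper bound on $\phi(p_{2j})$ that grows slowly enough (subfactorially, roughly like $C^j (2j-1)\mathrm{!!}$) so that $\phi(p_{2j})^{-1/2j}$ is bounded below by a constant multiple of $j^{-1/2}$, whose sum diverges.

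First I would fix $M>0$ and $\overline{\sigma}>0$ such that $S\subseteq[-M,M]\times[0,\overline{\sigma}]$, which is possible by compactness of $S$. Then from the explicit formula \eqref{lem1-1}, since every term is nonnegative and $|m|\le M$, $\sigma\le\overline{\sigma}$ on $S$, I would bound pointwise on $S$:
\begin{equation}
\label{plan-bound}
p_{2j}(m,\sigma)\,\le\,\sum_{k=0}^j (2k-1)\mathrm{!!}\,\overline{\sigma}^{2k}\,M^{2(j-k)}\,{2j\choose 2k}\,.
\end{equation}
Integrating against the probability measure $\phi$ preserves this bound, so $\phi(p_{2j})$ is at most the right-hand side of \eqref{plan-bound}. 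The key observation is that the right-hand side of \eqref{plan-bound} is exactly the $2j$-th moment of a single Gaussian $\mathcal{N}(M,\overline{\sigma})$ (indeed, comparing with \eqref{lem1-1}, it equals $p_{2j}(M,\overline{\sigma})$), which is finite and, being the moment sequence of a measure satisfying the exponential-integrability condition from the discussion after Theorem~\ref{th-carleman}, automatically satisfies the Carleman divergence condition \eqref{th-carleman-1}. Hence $\sum_{j\ge1} p_{2j}(M,\overline{\sigma})^{-1/2j}=+\infty$, and since $\phi(p_{2j})^{-1/2j}\ge p_{2j}(M,\overline{\sigma})^{-1/2j}$ for every $j$, comparison of series gives \eqref{coro1-1}.

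Alternatively, if one prefers a fully self-contained estimate: the right-hand side of \eqref{plan-bound} is at most $(j+1)\cdot\max(M,\overline{\sigma})^{2j}\cdot(2j-1)\mathrm{!!}\cdot\sum_k{2j\choose 2k}\le (j+1)\,2^{2j}\,\max(M,\overline{\sigma},1)^{2j}\,(2j-1)\mathrm{!!}$, using $(2k-1)\mathrm{!!}\le(2j-1)\mathrm{!!}$. Taking $(\,\cdot\,)^{-1/2j}$ and using $(2j-1)\mathrm{!!}^{1/2j}=O(\sqrt{j})$ together with $(j+1)^{1/2j}\to1$, one gets $\phi(p_{2j})^{-1/2j}\ge c/\sqrt{j}$ for some constant $c>0$ and all large $j$, and $\sum_j 1/\sqrt j=\infty$ finishes the proof.

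The only mildly delicate point is keeping track of the asymptotics of the double factorial: one needs $(2j-1)\mathrm{!!}=(2j)!/(2^j j!)$ and Stirling to see $(2j-1)\mathrm{!!}^{1/2j}\sim\sqrt{2j/e}$, so that the $j$-th term decays only like $j^{-1/2}$ and the series genuinely diverges (it would converge if the moments grew like $(2j)!$ rather than $(2j-1)\mathrm{!!}$). This is exactly the reason the Gaussian (and any mixture of Gaussians with bounded parameters) is moment-determinate, so no real obstacle arises — the boundedness of $S$ does all the work.
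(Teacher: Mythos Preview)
Your proof is correct, and your main approach is genuinely different from the paper's. The paper proceeds by a direct combinatorial estimate: with a single $M$ bounding both $|m|$ and $\sigma$ on $S$, it manipulates the sum in \eqref{lem1-1} term by term to obtain the crude bound $p_{2j}(m,\sigma)<(2Mj)^{2j}$, hence $\phi(p_{2j})^{-1/2j}>1/(2Mj)$, and concludes via divergence of the harmonic series $\sum_j 1/j$.

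Your route is more conceptual: you recognise that the pointwise upper bound $p_{2j}(M,\overline{\sigma})$ is \emph{itself} the $2j$-th moment of a fixed Gaussian $\mathcal{N}(M,\overline{\sigma})$, and then invoke the Carleman condition for that single Gaussian (which follows from the exponential-integrability remark after Theorem~\ref{th-carleman}) together with term-by-term comparison. This avoids all the combinatorics and makes transparent \emph{why} the result holds: a mixture with bounded parameters is dominated, moment-wise, by a single Gaussian. The cost is that it is not fully self-contained unless one is willing to grant Carleman for Gaussians as known. Your alternative explicit estimate (leading to $\phi(p_{2j})^{-1/2j}\ge c/\sqrt{j}$) is also correct and in fact sharper than the paper's $1/j$ lower bound; the superfluous factor $(j+1)$ in your intermediate bound is harmless since $(j+1)^{1/2j}\to1$.
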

\begin{proof}
 Observe that as $S$ is compact, there exists $M>0$ such that
$\vert m\vert, \sigma<M$ for all $(m,\sigma)\in S$, and so
in particular, 
\begin{eqnarray}
\nonumber
p_{2j}(m,\sigma)
\,<\,M^{2j}\,\sum_{k=1}^j\frac{(2j)\mathrm{!}}{(2(j-k))\mathrm{!}}\frac{(2k)\mathrm{!!}}{(2k)\mathrm{!}}&<&
M^{2j}\sum_{k=1}^j\frac{(2j)(2j-1)\cdots (2j-(2k-1))}{(2k-1)\mathrm{!!}}\\
\nonumber
&<&M^{2j}\sum_{k=1}^j(2j)^{2k-1}\,<\,M^{2j}\sum_{k=1}^j(2j)^{2j-1})\\
\label{aux-2j-1}
&<&(2Mj)^{2j}\,,
\end{eqnarray}
and therefore  if $\phi$ is a  probability measure on $S$, then $\phi(p_{2j})<\,(2Mj)^{2j}$ for all $j\in\N$, which in turn implies the desired result
\begin{equation}
\label{aux-2j-2}
\sum_{j=1}^\infty \phi(p_{2j})^{-1/2j}\,>\,\frac{1}{2M}\sum_{j=1}^\infty j^{-1}\,=\,+\infty\,.\end{equation}
\end{proof}

\section{Main result}
\label{univariate}
\subsection{The optimal transport problem \eqref{OT-1} and its exact moment relaxation \eqref{OT-moments}}
Consider the optimal transport problem \eqref{OT-1}.
\begin{thm}
\label{th-OT}
Let $S\subset\R\times\R_+$ be compact, and assume that $\mu\in\mathscr{P}(\R)$ satisfies \eqref{suff-cond}.

(i) The optimal transport problem \eqref{OT-1} has an optimal solution $(\phi^*,\lambda^*)\in\mathscr{P}(S)\times\mathscr{P}(\R^2)$ which is also an optimal solution of \eqref{OT-moments}. Moreover, both measures
$\lambda^*\in\mathscr{P}(\R^2)$ and $\nu_{\phi^*}\in\mathscr{P}(\R)$ are moment determinate.

(ii) Moreover, $\tau=0$ if and only if $\lambda^*=\mu\otimes\mu
$ and $\mu=\nu_{\phi^*}$,  i.e., $\mu$ is a Gaussian mixture with $\phi^*$ a mixing measure of parameters 
$(m,\sigma)\in S$.
\end{thm}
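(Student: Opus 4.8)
The plan is to prove (i) by the direct method of the calculus of variations, then derive (ii) essentially for free from the characterization of $W_2$ together with moment determinacy.

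For part (i), first I would establish that the feasible set of \eqref{OT-1} is nonempty (any fixed $\phi\in\mathscr{P}(S)$ gives a $\nu_\phi$ with a well-defined coupling $\lambda$, e.g. the independent coupling $\mu\otimes\nu_\phi$, which is admissible since $\mu$ has a finite second moment by \eqref{suff-cond} and $\nu_\phi$ does too because $S$ is compact). Next I would take a minimizing sequence $(\phi_k,\lambda_k)$. Compactness of $\mathscr{P}(S)$ (weak-$*$) gives a subsequence with $\phi_k\to\phi^*\in\mathscr{P}(S)$. The key point is to show tightness of $(\lambda_k)$: the first marginal is fixed at $\mu$ hence tight, and the second marginals $\nu_{\phi_k}$ are tight because their densities are uniform mixtures of Gaussians with parameters in the compact set $S$ — concretely $\int\exp(c'|y|)\,\d\nu_{\phi_k}(y)$ is bounded uniformly in $k$ by a constant depending only on $S$ and $c'$, which one checks directly from \eqref{intro-mixture}. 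A product of tight families of marginals yields a tight family of couplings, so along a further subsequence $\lambda_k\to\lambda^*$ weakly. One then verifies $\lambda^*_x=\mu$ and $\lambda^*_y=\nu_{\phi^*}$ by testing against bounded continuous functions and using $\phi_k\to\phi^*$ (for the second marginal one uses that $(m,\sigma)\mapsto\frac1{\sqrt{2\pi}\sigma}\int f(x)\exp(-(x-m)^2/2\sigma^2)\,\dx$ is continuous and bounded on $S$ for bounded continuous $f$). Lower semicontinuity of $\lambda\mapsto\int(x-y)^2\,\d\lambda$ under weak convergence (the integrand is nonnegative and continuous) gives $\int(x-y)^2\,\d\lambda^*\le\liminf\int(x-y)^2\,\d\lambda_k=\tau$, so $(\phi^*,\lambda^*)$ is optimal. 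The uniform exponential-moment bound just obtained, applied to $\lambda^*$ and to $\nu_{\phi^*}$, gives via the bivariate Carleman condition (Theorem \ref{th-carleman}) that $\lambda^*$ and $\nu_{\phi^*}$ are moment determinate.

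For the equivalence with \eqref{OT-moments}: any feasible $(\phi,\lambda)$ of \eqref{OT-1} is feasible for \eqref{OT-moments} with the same objective value, using Proposition \ref{prop1} (the moments of $\nu_\phi$ are exactly $\phi(p_j)$). Conversely, given a feasible $(\phi,\lambda)$ of \eqref{OT-moments}, the constraint $\lambda_{(j,0)}=\mu_j$ for all $j$ together with determinacy of $\mu$ (from \eqref{suff-cond}) forces $\lambda_x=\mu$; the constraint $\lambda_{(0,j)}=\phi(p_j)$ for all $j$ says $\lambda_y$ and $\nu_\phi$ share all moments, and since $\nu_\phi$ has uniformly bounded exponential moments it is determinate, so $\lambda_y=\nu_\phi$ — hence $(\phi,\lambda)$ is feasible for \eqref{OT-1}. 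So the two problems have the same value and the same optimal solutions; in particular the optimizer from the first paragraph solves \eqref{OT-moments}.

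For part (ii): if $\tau=0$ then $\int(x-y)^2\,\d\lambda^*=0$, so $\lambda^*$ is supported on the diagonal $\{x=y\}$, which forces $\lambda^*_x=\lambda^*_y$, i.e. $\mu=\nu_{\phi^*}$; and a measure on the diagonal with marginal $\mu$ is the pushforward of $\mu$ under $x\mapsto(x,x)$. (The statement "$\lambda^*=\mu\otimes\mu$" should read "$\lambda^*=(\mathrm{id},\mathrm{id})_\#\mu$", the diagonal coupling; I would phrase it that way.) Conversely if $\mu=\nu_{\phi^*}$ for some $\phi^*\in\mathscr{P}(S)$, then the diagonal coupling is admissible with cost $0$, so $\tau=0$. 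The identity $\mu=\nu_{\phi^*}$ is precisely the statement that $\mu$ is a Gaussian mixture with mixing measure $\phi^*$ supported on $S$.

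\medskip
I expect the main obstacle to be the tightness/continuity bookkeeping for the second marginal: one must show both that $\{\nu_{\phi_k}\}$ is tight and that $\phi_k\to\phi^*$ implies $\nu_{\phi_k}\to\nu_{\phi^*}$ weakly, which hinges on the (mild but needed) fact — guaranteed once $S$ enforces $\sigma\ge\underline\sigma>0$ as in Remark \ref{rem-mixture} — that the map $(m,\sigma)\mapsto\frac1{\sqrt{2\pi}\sigma}\exp(-(x-m)^2/2\sigma^2)$ and its integrals against bounded continuous functions are jointly continuous and uniformly integrable on the compact $S$. Everything else (determinacy via Carleman, the diagonal-coupling argument for (ii)) is then routine.
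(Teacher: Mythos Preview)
Your argument is correct, and it takes a genuinely different route from the paper's. The paper works entirely at the level of \emph{moment sequences}: after extracting a weak-$*$ limit $\phi^*$ on the compact $S$, it does not pass to a weak limit of the couplings $\lambda^{(n)}$ via tightness. Instead it bounds all moments $\lambda^{(n)}_{(i,j)}$ uniformly in $n$ (using $\M_n(\blambda^{(n)})\succeq0$, the fixed $x$-marginal, and the polynomial bound $p_{2j}(m,\sigma)<(2Mj)^{2j}$ of Corollary~\ref{coro1}), rescales to land in the unit ball of $\ell_\infty$, uses Banach--Alaoglu to extract a pointwise-convergent subsequence of moment arrays, and only then invokes the bivariate Carleman criterion (Theorem~\ref{th-carleman}) to manufacture a determinate representing measure $\lambda^*$ from the limiting array. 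Your approach---tightness of couplings from tightness of marginals, Prokhorov, lower semicontinuity of the cost---is the standard direct method in optimal transport and is arguably cleaner; the paper's moment-sequence argument, by contrast, dovetails with the later SDP-hierarchy proofs (Theorem~\ref{th-hierarchy} recycles the same $\ell_\infty$-compactness machinery verbatim) and has the minor advantage that it never touches the Gaussian density, only the polynomials $p_j(m,\sigma)$, so the degenerate case $\sigma=0$ requires no separate discussion. Your identification of the continuity issue at $\sigma=0$ is apt: the paper's theorem allows $S\subset\R\times\R_+$ (closed half-line), and your weak-continuity step $\phi_k\Rightarrow\phi^*\implies\nu_{\phi_k}\Rightarrow\nu_{\phi^*}$ then needs the (true, but not density-based) fact that $(m,\sigma)\mapsto\mathcal{N}(m,\sigma)$ is weakly continuous all the way down to the Dirac at $\sigma=0$. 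Finally, you are right that ``$\lambda^*=\mu\otimes\mu$'' in part (ii) is a slip for the diagonal coupling $(\mathrm{id},\mathrm{id})_\#\mu$; the paper's own proof makes the same misidentification.
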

For clarity of exposition a proof is postponed to Section \ref{appendix}.
 
\begin{remark}
(a) Notice that the mixing probability measure $\phi^*\in\mathscr{P}(S)$  is not necessary unique. That is,
two different mixing measures $\phi_1$ and $\phi_2$ may produce the same mixture distribution $\nu_{\phi_1}=\nu_{\phi_2}$. This uniqueness issue is related to rational identifiability issue already mentioned and explored in e.g.
\cite{mom-var,alg-ident}. However in our restricted setting, 
uniqueness is perhaps easier to get as the support of the mixing measure  is \emph{not} the whole space $\R^2$ but a 
compact set $S\subset\R^2$.

(b) In Theorem \ref{th-OT}, 
 $\nu_{\phi^*}$ is  a mixture of Gaussian measures with parameters $(m,\sigma)\in S$.
 If $\sigma=0$ is tolerated in $(m,\sigma)\in S$, the mixture $\phi^*$ can be made of ``pure" Gaussian measures $\mathcal{N}(m,\sigma)$ with $\sigma>0$ and 
 atomic measures $\delta_{m}``="\mathcal{N}(m,0)$. If one wishes to 
 obtain the closest mixture $\nu_{\phi^*}$ of ``pure" Gaussian measures $\mathcal{N}(m,\sigma)$ with $\sigma>0$, (i.e., with no atomic part), then  in Theorem \ref{th-OT} 
  one should replace 
 $S\subset\R\times\R_{+}$ with $S\subset\R\times\R_{++}$ (with $\R_{++}:=\{x: x>0\}$). 
 As $S$ is assumed to be compact this implies that for some $\delta>0$,
 $\sigma\geq\delta$ for all $(m,\sigma)\in S$.
 
The interesting case is precisely when $\sigma=0$ is \emph{not} tolerated. Indeed if $\sigma=0$ is tolerated then any probability measure 
 $\mu$ supported on the set $\{\,m: (m,0)\in S\,\}$ (in particular atomic measures) is the ``Gaussian mixture" $\mathcal{N}(m,0)\,\d\mu(m)$ with mixing measure $\mu$ itself,
 which is not really what one wants to detect. see  Remark \ref{rem-mixture}(ii). 
 \end{remark}

\paragraph{A dual of \eqref{OT-1}}

For any $g\in\R[y]$ write $y\mapsto g(y):=\sum_k g_ky^k$ where $(g_k)$ is the vector of coefficients of $g$
in the monomial basis $(y^k)_{k\in\N}$. Consider the optimization problem:
\begin{equation}
 \label{OT-dual}
 \begin{array}{rl}
 \tau^*=\displaystyle\sup_{q\in\R[x],g\in\R[y]}&\{\, \displaystyle\int q\,\d\mu:
 \:q(x)+g(y)\,\leq\,(x-y)^2\,\quad\forall x,y\in\R\,;\\
  &\displaystyle\sum_k g_k\,p_k(m,\sigma)\,\geq\,0\,,\quad\forall (m,\sigma)\in S\,\}\,.
 \end{array}
\end{equation}
Observe that:
\[\sum_k g_k\,p_k(m,\sigma)\,\geq\,0\,,\:\forall (m,\sigma)\in S\quad\Leftrightarrow\quad
\frac{1}{\sqrt{2\pi}\sigma}\int g(x)\,\exp(\frac{-(x-m)^2}{2\sigma^2})\,\dx\,\geq\,0\,,\]
for all $(m,\sigma)\in S$.

\begin{proposition}
 The optimization problem \eqref{OT-dual} is a dual of \eqref{OT-1}, i.e., weak duality $\tau\geq\tau^*$ holds.
  \end{proposition}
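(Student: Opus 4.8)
The plan is to establish weak duality $\tau \geq \tau^*$ by a direct Lagrangian-style argument: take any feasible pair $(q,g)$ for \eqref{OT-dual} and any feasible pair $(\lambda,\phi)$ for \eqref{OT-1}, and show that $\int q\,\d\mu \leq \int (x-y)^2\,\d\lambda$. Since this holds for all feasible pairs on both sides, taking the supremum on the left and the infimum on the right yields the claim.

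First I would fix $(q,g)$ feasible for \eqref{OT-dual} and $(\lambda,\phi)$ feasible for \eqref{OT-1}, so that $\lambda_x = \mu$ and $\lambda_y = \nu_\phi$. Integrating the pointwise inequality $q(x) + g(y) \leq (x-y)^2$ against $\lambda$ gives
\begin{equation*}
\int q(x)\,\d\lambda(x,y) + \int g(y)\,\d\lambda(x,y) \,\leq\, \int (x-y)^2\,\d\lambda(x,y)\,.
\end{equation*}
By the marginal constraints, $\int q(x)\,\d\lambda(x,y) = \int q\,\d\mu$ and $\int g(y)\,\d\lambda(x,y) = \int g\,\d\nu_\phi$. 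Hence it remains to show that $\int g\,\d\nu_\phi \geq 0$. Using the definition \eqref{intro-mixture} of $\nu_\phi$ and Fubini's theorem,
\begin{equation*}
\int g\,\d\nu_\phi \,=\, \int_S \Bigl(\frac{1}{\sqrt{2\pi}\sigma}\int g(x)\,\exp\bigl(\tfrac{-(x-m)^2}{2\sigma^2}\bigr)\,\dx\Bigr)\,\d\phi(m,\sigma)\,=\,\int_S \sum_k g_k\,p_k(m,\sigma)\,\d\phi(m,\sigma)\,,
\end{equation*}
where the last equality uses Proposition \ref{prop1} identifying the inner Gaussian integral of $x^k$ with $p_k(m,\sigma)$. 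The second feasibility constraint in \eqref{OT-dual} says $\sum_k g_k\,p_k(m,\sigma) \geq 0$ on $S$, and $\phi$ is a probability measure on $S$, so the integral is nonnegative. Combining, $\int q\,\d\mu \leq \int (x-y)^2\,\d\lambda$, and taking sup over $(q,g)$ and inf over $(\lambda,\phi)$ gives $\tau^* \leq \tau$.

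The only point requiring a little care — and the closest thing to an obstacle — is justifying the application of Fubini's theorem and the interchange of the (finite) sum over $k$ with the integrals: one must check integrability, which follows since $g$ is a polynomial, $S$ is compact, $\phi$ is a probability measure, and the Gaussian moments $p_k(m,\sigma)$ are continuous (indeed polynomial) functions on the compact set $S$, hence bounded there. Condition \eqref{suff-cond} on $\mu$ ensures $\int q\,\d\mu$ is finite as well. No properness or attainment arguments are needed for weak duality; those belong to the (postponed) strong-duality discussion. Thus the proof is essentially the standard weak-duality computation for a Monge–Kantorovich-type problem, with the extra twist that the second dual constraint is precisely what makes $\int g\,\d\nu_\phi \geq 0$ for every admissible mixing measure $\phi$.
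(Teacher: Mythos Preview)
Your proof is correct and follows essentially the same route as the paper's: integrate the constraint $q(x)+g(y)\leq (x-y)^2$ against $\lambda$, use the marginal conditions to rewrite the two terms as $\int q\,\d\mu$ and $\int g\,\d\nu_\phi$, and then use the second dual constraint to conclude $\int g\,\d\nu_\phi=\int_S\sum_k g_k\,p_k\,\d\phi\geq 0$. The only difference is that you spell out the Fubini/integrability justification, which the paper leaves implicit.
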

 
\begin{proof}
 Let $(\lambda,\phi)$ (resp. $(q, g)$) be a feasible solution of \eqref{OT-1}
 (resp. \eqref{OT-dual}). Then as $\lambda_x=\mu$ and
 $\lambda_y=\nu_\phi$,
 \begin{eqnarray*}
 \int (x-y)^2\,\d\lambda(x,y)\,\geq\,\int (q+g)\,\d\lambda
&=&\int q\,d\mu+\int g\,\d\nu_\phi\\
&=&\int q\,\d\mu+\int_S\underbrace{(\sum_k g_k\,p_k)}_{\geq0\mbox{ on $S$}}\,\d\phi\geq\int q\,\d\mu\,,
 \end{eqnarray*}
 and as $(\lambda,\phi)$ and $(q, g)$ are arbitrary feasible solutions,
 it follows that $\tau\geq\tau^*$.
\end{proof}
\subsection{A hierarchy of semidefinite relaxations}

We here consider the case where the set $S\subset\R^2$ of parameters $(m,\sigma)$ is the compact basic semi-algebraic set
\begin{equation}
\label{def-S}
 S\:=\,\{\,(m,\sigma):\: u_j(m,\sigma)\,\geq0\,,\:j=1,\ldots,s\,\}\,,
\end{equation}
for some polynomials $u_j\subset \R[m,\sigma]$, $j=1,\ldots,s$, and we let $u_0:=\1$ (the constant polynomial equal to $1$ for all $(m,\sigma)$.
Moreover as $S$ is compact, we also assume that we know a scalar $R$ such that
$S\subset \{(m,\sigma):  m^2+\sigma^2<R^2\}$ and without changing $S$ we include the redundant quadratic constraint 
$R^2-m^2-\sigma^2\geq0$ in its definition \eqref{def-S},
with for instance $u_1(m,\sigma)=R^2-m^2-\sigma^2$. 

Next, let $d_j:=\lceil\mathrm{deg}(u_j)/2\rceil$, $n_0:=\max_j d_j$ and
fix $n\geq n_0$. With $p_j\in\R[m,\sigma]$ as in \eqref{def-p_j}, define:
\begin{equation}
\label{relax-mom}
\begin{array}{rl}\tau_n\,=\,\displaystyle\min_{\bphi,\blambda}&\{\,\lambda((x-y)^2)\,:
\quad\lambda_{(j,0)}=\mu_j\,;\quad
\lambda_{(0,j)}-\phi(p_j(m,\sigma))\,=\,0\,,\quad\forall j\leq 2n\,;\\
&\M_n(\blambda)\,\succeq0\,,\,\M_n(\bphi)\,\succeq\,0\,,\:\M_{n-d_j}(u_j\cdot \bphi)\,\succeq\,0\,,\quad j=0,\ldots,s\,\}\,,
\end{array}\end{equation}
where $\blambda=(\lambda_{(i,j)})_{(i,j)\in\N^2_{2n}}$ and
$\bphi=(\phi_{(i,j)})_{(i,j)\in\N^2_{2n}}$.
Problem \eqref{relax-mom} is a semidefinite program\footnote{A semidefinite program is a convex conic program on the cone of positive semidefinite matrices. Up to arbitrary (but fixed) precision, 
it can be solved efficiently; see e.g. \cite{anjos-lasserre,odonnell}}. Its dual reads:
\begin{equation}
\label{relax-sos}
\begin{array}{rl}\tau^*_n\,=\,\displaystyle\sup_{q,g,\sigma,\theta_j}&\{\,\displaystyle\int q\,d\mu:\quad
q(x)+g(y)+\sigma(x,y)\,=\,(x-y)^2\,,\quad\forall x\,,y\in\R\,;\\
&\displaystyle\sum_{k=0}^{2n} g_k\,p_k(m,\sigma)\,=\,\displaystyle\sum_{j=0}^s \theta_j(m,\sigma)\,u_j(m,\sigma)\,;\\
&q\in\R[x]_{2n}\,,g\in\R[y]_{2n}\,;\:\sigma\in\Sigma[x,y]_n\,;\:\theta_j\in\Sigma[m,\sigma]_{n-d_j}\,,\:j=0,\ldots,s\,\}\,,
\end{array}\end{equation}
with $\tau^*_n\leq\tau_n$ for all $n\geq n_0$.
\begin{lemma}
\label{lem1}
For each fixed $n\geq n_0$, \eqref{relax-mom} is a semidefinite program and a convex relaxation of the infinite-dimensional problem \eqref{OT-1} and so $\tau_n\leq \tau$ for all $n\geq n_0$.  Moreover, if $S$ has nonempty interior and 
$\mathrm{supp}(\mu)$ contains an open set, then $\tau_n=\tau^*_n$ and \eqref{relax-sos} has an optimal solution
$(q^*,g^*,\theta^*_0,\ldots,\theta^*_s)$.
\end{lemma}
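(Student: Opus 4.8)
\emph{Proof proposal.} The plan is to prove the two claims separately: that \eqref{relax-mom} is a well-posed SDP relaxing \eqref{OT-1} (hence $\tau_n\le\tau$), and that under the two regularity hypotheses a Slater point exists for \eqref{relax-mom}, which by conic duality yields $\tau_n=\tau^*_n$ together with solvability of \eqref{relax-sos}. For the relaxation claim I would first note that in \eqref{relax-mom} the objective $\lambda((x-y)^2)=\lambda_{(2,0)}-2\lambda_{(1,1)}+\lambda_{(0,2)}$ and the constraints $\lambda_{(j,0)}=\mu_j$, $\lambda_{(0,j)}=\phi(p_j(m,\sigma))$ (recall $p_j\in\R[m,\sigma]$ by Proposition~\ref{prop1}) are affine in $(\blambda,\bphi)$, while $\M_n(\blambda)\succeq0$, $\M_n(\bphi)\succeq0$, $\M_{n-d_j}(u_j\cdot\bphi)\succeq0$ are linear matrix inequalities, so \eqref{relax-mom} is an SDP. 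Next I would take any feasible $(\lambda,\phi)\in\mathscr{P}(\R^2)\times\mathscr{P}(S)$ for \eqref{OT-1}; by Theorem~\ref{th-OT}(i) it is feasible for the exact reformulation \eqref{OT-moments} with the same cost, and its truncated moment vectors $\blambda=(\lambda_{(i,j)})_{(i,j)\in\N^2_{2n}}$, $\bphi=(\phi_{(i,j)})_{(i,j)\in\N^2_{2n}}$ — finite thanks to \eqref{suff-cond} and compactness of $S$ — satisfy all the constraints of \eqref{relax-mom}: the moment matrices are p.s.d.\ as genuine moment matrices, the localizing matrices are p.s.d.\ since $\phi$ is supported on $S=\{u_j\ge0\}$, $\lambda_{(j,0)}=\mu_j$ since $\lambda_x=\mu$, and $\lambda_{(0,j)}=\phi(p_j)$ since $\lambda_y=\nu_\phi$ has $j$-th moment $\phi(p_j)$ by Proposition~\ref{prop1}; its value equals $\int(x-y)^2\,\d\lambda$, giving $\tau_n\le\tau$. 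I would also check attainment of the minimum: the equalities force $\lambda_{(0,0)}=\phi_{(0,0)}=1$ and, the redundant ball inequality $R^2-m^2-\sigma^2\ge0$ being among the $u_j$, the standard compactness bound for moment matrices over a compact set makes the $\bphi$-feasible set bounded; then $\lambda_{(2n,0)}=\mu_{2n}$, $\lambda_{(0,2n)}=\phi(p_{2n})$ are bounded and p.s.d.ness of $\M_n(\blambda)$ bounds all $\lambda_{(i,j)}$, so the feasible set is compact and the linear objective attains its infimum.

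For strong duality, since \eqref{relax-sos} is the conic dual of \eqref{relax-mom}, it suffices by the conic duality theorem to show \eqref{relax-mom} is bounded below and strictly feasible. Boundedness below is immediate: the principal $2\times2$ submatrix of $\M_n(\blambda)$ on the rows indexed by $(1,0),(0,1)$ is p.s.d., whence $\lambda((x-y)^2)\ge0$. To build a Slater point I would proceed as follows. Since $\mathrm{int}(S)\ne\emptyset$ and $\sigma\ge0$ on $S$ (so $\{\sigma=0\}\subseteq\partial S$), I can fix a closed ball $B\subset\mathrm{int}(S)\cap\{\sigma>0\}$ on which every $u_j$ is strictly positive (after discarding any $u_j\equiv0$, whose localizing block is vacuous, and shrinking $B$ away from the zero sets of the remaining $u_j$, which are proper subvarieties). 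I then pick $\phi\in\mathscr{P}(S)$ with a density positive on $B$, and let $\bphi$ collect its moments up to degree $2n$; for every nonzero $h\in\R[m,\sigma]_n$ one has $\int h^2\,\d\phi>0$ and $\int h^2u_j\,\d\phi>0$, so $\M_n(\bphi)\succ0$ and $\M_{n-d_j}(u_j\cdot\bphi)\succ0$. For $\blambda$ I would take the degree-$2n$ moments of $\lambda:=\mu\otimes\nu_\phi$ (finite, since $\mu$ has all moments and $\nu_\phi$ does too by Proposition~\ref{prop1}); then $\lambda_x=\mu$, $\lambda_y=\nu_\phi$ give $\lambda_{(j,0)}=\mu_j$ and $\lambda_{(0,j)}=\phi(p_j)$, and because $B\subseteq\{\sigma>0\}$ the density of $\nu_\phi$ is positive on all of $\R$, so $\mathrm{supp}(\mu\otimes\nu_\phi)\supseteq\mathrm{supp}(\mu)\times\R$ contains a nonempty open subset of $\R^2$ (here I use that $\mathrm{supp}(\mu)$ contains an open set); hence $\int h^2\,\d(\mu\otimes\nu_\phi)>0$ for every nonzero $h\in\R[x,y]_n$, i.e.\ $\M_n(\blambda)\succ0$. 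This $(\blambda,\bphi)$ is the required Slater point, and the conic duality theorem then yields $\tau_n=\tau^*_n$ with \eqref{relax-sos} solvable.

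I expect the main obstacle to be exactly this Slater-point construction: one must exhibit a single $(\blambda,\bphi)$ at which \emph{all} the moment and localizing matrices are simultaneously positive definite while the affine constraints hold \emph{exactly}. This is where both extra hypotheses enter — $\mathrm{int}(S)\ne\emptyset$ to make $\M_n(\bphi)$ and the localizing matrices p.d., and $\mathrm{supp}(\mu)$ containing an open set to make $\M_n(\blambda)$ p.d.\ via the product $\mu\otimes\nu_\phi$ — and one must not forget to place the support of $\phi$ inside $\{\sigma>0\}$ so that $\nu_\phi$ has full support before invoking conic duality. The remaining verifications (affineness of the constraints, p.s.d.ness of genuine moment matrices, the identity-theorem arguments for the strict inequalities, and the compactness bound used for attainment) are routine.
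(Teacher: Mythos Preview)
Your proof is correct and follows essentially the same route as the paper: show \eqref{relax-mom} relaxes \eqref{OT-1} by truncating moments of a feasible pair, then establish strong duality by exhibiting a Slater point of the form $\lambda=\mu\otimes\nu_\phi$ with $\phi$ supported in the interior of $S$. The one noteworthy difference is in how positive definiteness of $\M_n(\blambda)$ is argued. The paper takes $\phi$ uniform on $S$ and, assuming $\langle\h,\M_n(\blambda)\h\rangle=0$, defines the auxiliary polynomial $q_h(x,m,\sigma):=\sum_k\theta^h_{n-k}(x)\,p_k(m,\sigma)$ (the Gaussian integral of $h^2$ in $y$), shows $q_h\ge0$ on $\R\times S$, and uses that $\mathrm{supp}(\mu)\times S$ contains an open set to force $q_h\equiv0$ and hence $h\equiv0$. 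Your argument is more direct: by deliberately placing $\phi$ inside $\{\sigma>0\}$ you ensure $\nu_\phi$ has everywhere-positive density, so $\mathrm{supp}(\mu\otimes\nu_\phi)$ already contains an open set in $\R^2$ and $\int h^2\,\d(\mu\otimes\nu_\phi)>0$ follows immediately from the identity theorem for polynomials. Both work; yours is shorter but needs the extra care of choosing $\phi$ away from $\{\sigma=0\}$, while the paper's $q_h$ device would still go through even if $\phi$ charged the boundary (the set $\{\sigma=0\}$ being $\phi$-null when $\phi$ is uniform on a set with nonempty interior).
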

\begin{proof}
 Let $(\lambda,\phi)\in\mathscr{P}(\R^2)\times\mathscr{P}(S)$ be a feasible solution of \eqref{OT-1}, and let 
 $\blambda=(\lambda_{(i,j)})_{(i,j)\in\N^2_{2n}}$ and
$\bphi=(\phi_{(i,j)})_{(i,j)\in\N^2_{2n}}$ be the vectors
of degree-$2n$ moments of $\lambda$ and $\phi$ respectively. Then  the couple $(\blambda,\bphi)$ is a feasible solution of \eqref{relax-mom}, and so $\tau_n\leq\tau$ for all $n\geq n_0$. Next, let $\phi$ be the probability measure uniformly distributed on $S$, and let $\lambda:=\mu\otimes\nu_{\phi}$. Then as $S$ has nonempty interior, 
$\M_n(u_j\cdot\bphi)\succ0$ for all $j=0,\ldots,s$, and $\M_n(\blambda)\succ0$. Indeed, suppose that for some
$h\in\R[x,y]_n$ with coefficient vector $\h$, 
\begin{eqnarray*}
0&=&\langle \h,\M_n(\blambda)\,\h\rangle\\
&=&\int h(x,y)^2\,\d\lambda(x,y)\\
&=&\int_{\R} \left(\int_{\R}h(x,y)^2\,\d\nu_{\phi}(y)\right)\,\d\mu(x)\\
&=&\int_{\R}\left(\int_{S}\frac{1}{\sqrt{2\pi}\sigma}\int_{\R} h (x,y)^2\exp(-(y-m)^2/2\sigma^2)\,\dy\,\d\phi(m,\sigma)\right)\,\d\mu(x)\,.
\end{eqnarray*}
We next prove that then $h=0$ and so $\M_n(\blambda)\succ0$. Observe that with $h\in\R[x,y]_n$, one may write
\[h(x,y)^2\,=\,\sum_{k=0}^{2n} \theta^h_{n-k}(x)\,y^{k}\,,\quad\mbox{with $\theta^h_{n-k}\in\R[x]_{2n-k}$ for all $k=0,\ldots,2n$,}\]
and therefore 
\[\frac{1}{\sqrt{2\pi}\sigma}\int_{\R} h(x,y)^2\,\exp(-(y-m)^2/2\sigma^2)\,\dy\,=:\,
\sum_{k=0}^{2n}\theta^h_{n-k}(x)\,p_k(m,\sigma)\,=:\,
q_h(x,m,\sigma)\,,\]
is a polynomial in $\R[x,m,\sigma]_{2n}$. Moreover, for all $x\in\R$,
\[q_h(x,m,\sigma)\,\geq\,
\left(\frac{1}{\sqrt{2\pi}\sigma}\int_{\R} \vert h(x,y)\vert \,\exp(-(y-m)^2/2\sigma^2)\,\dy\right)^2\,\geq\,0\,,\quad\forall (m,\sigma)\in\,S\,.\]
Hence,
\begin{eqnarray*}
0&=&\int_{\R}\int_{S}\frac{1}{\sqrt{2\pi}\sigma}\int_{\R} h (x,y)^2\exp(-(y-m)^2/2\sigma^2)\,\dy\,\d\phi(m,\sigma)\,\d\mu(x)\\
&=&\int_{\R}\int_S q_h(x,m,\sigma)\,\d\phi(m,\sigma)\,\d\mu(x)\,,
\end{eqnarray*}
implies that $q_h(x,m,\sigma)=0$, for $\mu\otimes\phi$-a.e. $(x,m,\sigma)\in \R\times S$. 
As $S$ has nonempty interior, $\mathrm{supp}(\mu)$ contains an open set, and 
$q_h$ is a polynomial, this implies $q_h\equiv 0$. But then this in  turn implies $h(x,y)=0$ for all $x,y$,
and therefore $h\equiv 0$.  Hence the couple $(\blambda,\bphi)$ is a strictly feasible solution of \eqref{relax-mom},
that is,  Slater's condition\footnote{Slater condition holds for the finite-dimensional conic program 
$\displaystyle\min_\x\{\,\c^T\x: \A\x=\b\,;\:\x\in\,K\,\}$ for a linear mapping $A:\R^p\to\R^q$, vectors $\c\in\R^p$,
$\b\in\R^q$, and a convex cone
$K\subset\R^p$, if there exists an admissible solution $\x_0\in\mathrm{int}(K)$.} holds for \eqref{relax-mom}. This in turn implies that there is not duality gap
between \eqref{relax-mom} and its dual \eqref{relax-sos}, i.e., $\tau_n=\tau^*_n$,
and as $\tau_n\geq0$, their value is finite.
\end{proof}
\begin{thm}
\label{th-hierarchy}
 Let $S\subset\R\times \R_{+}$ as in \eqref{def-S} be compact, and 
 let $\mu\in\mathscr{P}(\R)$ be a probability measure such that \eqref{suff-cond}
 holds for some scalar $c>0$.  
 
 (i) For every fixed $n$, \eqref{relax-mom} is a semidefinite relaxation of \eqref{OT-moments} (hence of \eqref{OT-1}) and has an optimal solution 
 $(\blambda^{(n)},\bphi^{(n)})$ with associated optimal value $\tau_n\leq \tau$ for all $n\geq n_0$.
 
 (ii) For any accumulation point $(\blambda^*,\bphi^*)$ of the sequence $(\blambda^{(n)},\bphi^{(n)})_{n\in\N}$ of optimal moment-sequences $(\blambda^{(n)},\bphi^{(n)})$ of \eqref{relax-mom}, 
 $\blambda^*$ (resp. $\bphi^*$) has a determinate representing measure $\lambda^*$ on $\R^2$ (resp. $\phi^*$ on $S$) and
 $(\phi^*,\lambda^*)$ is an optimal solution of \eqref{OT-1} and \eqref{OT-moments}. That is:
 \[\tau_n\uparrow \tau \,=\,W_2(\mu-\nu_{\phi^*})^2\quad\mbox{as $n\to\infty$}\,.\]
 \end{thm}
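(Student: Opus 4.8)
The plan is to run the standard convergence argument for the Moment-SOS hierarchy, but with two extra inputs specific to this problem: (a) the variable $\bphi$ lives on the compact set $S$, so its moment sequence is automatically bounded and tight; and (b) the variable $\blambda$ lives on $\R^2$ but is anchored by the constraints $\lambda_{(j,0)}=\mu_j$, so its $x$-marginal is the fixed determinate measure $\mu$, while its $y$-marginal is forced (via $\lambda_{(0,j)}=\phi(p_j)$) to be the Gaussian mixture $\nu_\phi$ built from a measure on the compact set $S$; together with the moment bound \eqref{suff-cond}, this will give moment bounds on $\blambda$ sufficient to invoke Carleman's condition (Theorem~\ref{th-carleman}).

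First I would establish part (i): for each fixed $n\ge n_0$, feasibility of \eqref{relax-mom} follows by truncating the moments of any feasible $(\lambda,\phi)$ for \eqref{OT-1}, exactly as in the proof of Lemma~\ref{lem1}; existence of an optimal solution $(\blambda^{(n)},\bphi^{(n)})$ then follows once we know the feasible set is compact, which is the content of the bounds below (the objective $\blambda\mapsto\lambda((x-y)^2)$ is linear, hence continuous). The bound $\tau_n\le\tau$ is immediate from the truncation argument. Second, I would derive uniform moment bounds on the optimal sequences. For $\bphi^{(n)}$: since $S$ is compact, $S\subset\{m^2+\sigma^2<R^2\}$ (we have inserted this redundant constraint), and the psd constraint $\M_{n-d_j}(u_j\cdot\bphi^{(n)})\succeq0$ together with $\M_n(\bphi^{(n)})\succeq0$ forces $|\phi^{(n)}_{(i,j)}|$ to be bounded (for fixed $(i,j)$, uniformly in $n$) by a constant depending only on $R$ and $(i,j)$ — this is the classical localization estimate. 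For $\blambda^{(n)}$: the constraints $\lambda^{(n)}_{(2j,0)}=\mu_{2j}$ bound the even pure-$x$ moments, and the bound \eqref{suff-cond} guarantees $\sum_j \mu_{2j}^{-1/2j}=\infty$; the constraints $\lambda^{(n)}_{(0,2j)}=\phi^{(n)}(p_{2j})$ combined with the estimate \eqref{aux-2j-1} from Corollary~\ref{coro1} (and the bound on $\bphi^{(n)}$) give $\lambda^{(n)}_{(0,2j)}\le (2Mj)^{2j}$, hence $\sum_j (\lambda^{(n)}_{(0,2j)})^{-1/2j}=\infty$ uniformly in $n$. In particular, all coordinates of $(\blambda^{(n)},\bphi^{(n)})$ are bounded uniformly in $n$, so by a diagonal extraction the sequence has accumulation points in the product topology.

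For part (ii), let $(\blambda^*,\bphi^*)$ be any such accumulation point, along a subsequence $(n_\ell)$. Passing to the limit in the psd constraints (each $\M_n$ and localizing matrix is a fixed finite block for $n$ large, and psd-ness is closed) gives $\M_n(\blambda^*)\succeq0$, $\M_n(\bphi^*)\succeq0$, $\M_{n-d_j}(u_j\cdot\bphi^*)\succeq0$ for \emph{all} $n$. The uniform Carleman bounds pass to the limit, so by Theorem~\ref{th-carleman}, $\bphi^*$ has a determinate representing measure $\phi^*$, which is supported on $S$ because all localizing matrices $\M_\bullet(u_j\cdot\bphi^*)$ are psd and $S$ is compact basic semi-algebraic (Putinar); likewise $\blambda^*$ has a determinate representing measure $\lambda^*$ on $\R^2$. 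Passing to the limit in the linear constraints gives $\lambda^*_{(j,0)}=\mu_j$ for all $j$, hence (by determinacy of $\mu$, which holds by \eqref{suff-cond}) $\lambda^*_x=\mu$; and $\lambda^*_{(0,j)}=\phi^*(p_j)=\int x^j\,\d\nu_{\phi^*}$ for all $j$, hence (by determinacy of $\nu_{\phi^*}$, established in Theorem~\ref{th-OT}(i)) $\lambda^*_y=\nu_{\phi^*}$. Thus $(\phi^*,\lambda^*)$ is feasible for \eqref{OT-1}, so $\tau\le\int(x-y)^2\,\d\lambda^*=\lim_\ell \tau_{n_\ell}\le\tau$; all inequalities are equalities, $(\phi^*,\lambda^*)$ is optimal for \eqref{OT-1} and \eqref{OT-moments}, and since $\tau_n$ is nondecreasing in $n$ and bounded above by $\tau$ with $\tau_{n_\ell}\to\tau$, the whole sequence satisfies $\tau_n\uparrow\tau=W_2(\mu,\nu_{\phi^*})^2$.

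The main obstacle is the uniform moment bound on $\blambda^{(n)}$ — specifically, controlling the pure $y$-moments $\lambda^{(n)}_{(0,2j)}$ and the mixed moments. The pure-$x$ moments are pinned to $\mu_j$, but the $y$-side is only pinned through $\phi^{(n)}$, so one genuinely needs the compactness of $S$ plus the polynomial-moment formula (Proposition~\ref{prop1}, estimate \eqref{aux-2j-1}) to turn a bound on $\bphi^{(n)}$ into a Carleman-type bound on $\blambda^{(n)}$; and the mixed moments $\lambda^{(n)}_{(i,j)}$ must then be controlled via Cauchy--Schwarz on the moment matrix, $|\lambda^{(n)}_{(i,j)}|^2\le \lambda^{(n)}_{(2i,0)}\,\lambda^{(n)}_{(0,2j)}$ (with an extra bounding step for odd indices), which is where the psd-ness of $\M_n(\blambda^{(n)})$ is used. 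Everything else is the routine closed-graph / determinacy machinery of the hierarchy.
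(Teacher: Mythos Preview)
Your proposal is correct and follows essentially the same route as the paper's proof: uniform moment bounds via the localization constraint $u_1=R^2-m^2-\sigma^2$ together with \cite[Proposition~3.6]{CUP} (which is exactly the Cauchy--Schwarz control of mixed moments you describe), extraction of a subsequence (the paper rescales into the unit ball of $\ell_\infty$ and uses weak-$\star$ sequential compactness, equivalent to your diagonal argument), then Putinar for $\bphi^*$ and Carleman (Theorem~\ref{th-carleman}) for $\blambda^*$, and finally feasibility and optimality via determinacy of the marginals. One small cleanup: for $\bphi^*$ the paper invokes Putinar's Positivstellensatz directly (the quadratic module $Q(g)$ is Archimedean), which already yields a representing measure supported on $S$; your detour through Theorem~\ref{th-carleman} for $\bphi^*$ is harmless but unnecessary.
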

 
For clarity of exposition a proof is postponed to Section \ref{appendix}.

\begin{coro}
\label{cor1}
Let $\tau_n$ and $\tau^*_n$ be as in \eqref{relax-mom} and \eqref{relax-sos}, respectively.
Under the assumption in Theorem \ref{th-hierarchy} and if $S$ has nonempty interior and $\mathrm{supp}(\mu)$ contains an open set, 
then $\tau=\lim_{n\to\infty}\tau_n=\lim_{n\to\infty}\tau^*_n$, and therefore 
there is no duality gap between \eqref{OT-moments} and \eqref{OT-dual}, that is,
\begin{equation}
 \label{OT-dual-new}
 \begin{array}{l}
\displaystyle\inf_{\lambda\in\mathscr{P}(\R^2),\phi\in\mathscr{P}(S)}\:\{\,\displaystyle\int (x-y)^2\,\d\lambda: \\
\mbox{s.t.}\quad \lambda_{j0}=\mu_j\,,\:\forall j\in\N\\
\lambda_{0,j}-\displaystyle\int p_j(m,\sigma)\,\d\phi\,=\,0\,,\:\forall j\in\N\,\}
\end{array}\,=\,\quad
 \begin{array}{l}
 \displaystyle\sup_{q\in\R[x],g\in\R[y]}\:\{\, \displaystyle\int q\,\d\mu:\\
 \mbox{s.t.}\: q(x)+g(y)\,\leq\,(x-y)^2\,\:\forall x,y\in\R\,;\\
 \displaystyle\sum_k g_k\,p_k(m,\sigma)\,\geq\,0\,,\quad\forall (m,\sigma)\in S\,\}\,.
 \end{array}
\end{equation}
\end{coro}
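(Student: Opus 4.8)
The idea is to chain together the three facts already established: weak duality (the Proposition before Section 2.2), finite-$n$ strong duality (Lemma \ref{lem1}), and monotone convergence of the primal relaxations (Theorem \ref{th-hierarchy}). Concretely, for each $n\geq n_0$ we have the string of inequalities
\begin{equation*}
\tau^*\,\leq\,\tau_n^*\,=\,\tau_n\,\leq\,\tau\,,
\end{equation*}
where the first inequality holds because any feasible $(q,g)$ of \eqref{OT-dual} can be turned into a feasible point of \eqref{relax-sos} once $q,g$ have degree $\leq 2n$ and the nonnegativity certificates are truncated appropriately (the polynomial $(x-y)^2-q(x)-g(y)$ is a globally nonnegative quadratic, hence an SOS $\sigma$ of degree $\leq n$, and $\sum_k g_k p_k$ is nonnegative on $S$, hence by Putinar's Positivstellensatz admits a representation $\sum_j \theta_j u_j$ with $\theta_j\in\Sigma[m,\sigma]$ — this requires $n$ large enough, but that is exactly why we pass to the sup over dual relaxations / the limit). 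The middle equality is Lemma \ref{lem1} under the stated hypotheses that $S$ has nonempty interior and $\mathrm{supp}(\mu)$ contains an open set, and the last inequality is the first assertion of Theorem \ref{th-hierarchy}(i).

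Next I would let $n\to\infty$. Theorem \ref{th-hierarchy}(ii) gives $\tau_n\uparrow\tau$, and since $\tau_n^*=\tau_n$ for every $n\geq n_0$, also $\tau_n^*\uparrow\tau$. Because $\tau^*\geq\sup_n\tau_n^*=\tau$ (every $(q,g)$ feasible for \eqref{OT-dual} of bounded degree eventually becomes feasible for \eqref{relax-sos}, so $\tau^*=\sup_n\tau_n^*$ — alternatively one simply notes $\tau^*\leq\tau$ by weak duality and $\tau^*\geq\lim\tau_n^*=\tau$), we conclude $\tau^*=\tau$. Combined with the identification $\tau=W_2(\mu,\nu_{\phi^*})^2$ from Theorem \ref{th-OT} and the fact (Theorem \ref{th-OT}(i)) that \eqref{OT-moments} and \eqref{OT-1} share the optimal value $\tau$, this is precisely the asserted absence of a duality gap between \eqref{OT-moments} and \eqref{OT-dual}: the left-hand optimization in \eqref{OT-dual-new} is \eqref{OT-moments} (equivalently \eqref{OT-1}), with value $\tau$, and the right-hand one is \eqref{OT-dual}, with value $\tau^*=\tau$.

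The one point needing care — the main obstacle — is the inequality $\tau^*\geq\lim_n\tau_n^*$, i.e. that every dual relaxation \eqref{relax-sos} underestimates $\tau^*$. A feasible point of \eqref{relax-sos} consists of $q\in\R[x]_{2n}$, $g\in\R[y]_{2n}$ together with SOS multipliers; the constraint $\sum_k g_k p_k=\sum_j\theta_j u_j$ with $\theta_j$ SOS forces $\sum_k g_k p_k\geq 0$ on $S$, and the constraint $q(x)+g(y)+\sigma(x,y)=(x-y)^2$ with $\sigma$ SOS forces $q(x)+g(y)\leq(x-y)^2$ pointwise; hence $(q,g)$ is feasible for \eqref{OT-dual} with the same objective $\int q\,\d\mu$, giving $\tau_n^*\leq\tau^*$ directly and uniformly in $n$. (So in fact $\tau_n^*\leq\tau^*\leq\tau$ needs no Positivstellensatz at all in this direction; Putinar is only invoked implicitly through Theorem \ref{th-hierarchy}, whose proof uses it to guarantee $\tau_n\uparrow\tau$.) Thus $\tau=\lim\tau_n=\lim\tau_n^*\leq\tau^*\leq\tau$, forcing equality throughout, which is the claim.

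\begin{proof}
By weak duality (the Proposition preceding Section \ref{univariate}) we have $\tau^*\leq\tau$. For each $n\geq n_0$, any feasible point $(q,g,\sigma,\theta_0,\ldots,\theta_s)$ of \eqref{relax-sos} yields, upon discarding $\sigma$ and the $\theta_j$, a pair $(q,g)$ with $q(x)+g(y)\leq(x-y)^2$ for all $x,y$ (since $\sigma\in\Sigma[x,y]_n$ is nonnegative) and $\sum_k g_k p_k(m,\sigma)=\sum_j\theta_j(m,\sigma)u_j(m,\sigma)\geq 0$ on $S$ (since each $\theta_j\in\Sigma[m,\sigma]_{n-d_j}$ is nonnegative and $u_j\geq 0$ on $S$); hence $(q,g)$ is feasible for \eqref{OT-dual} with the same objective value, so $\tau_n^*\leq\tau^*$. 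Under the stated hypotheses, Lemma \ref{lem1} gives $\tau_n^*=\tau_n$ for all $n\geq n_0$, and Theorem \ref{th-hierarchy}(ii) gives $\tau_n\uparrow\tau$ as $n\to\infty$. Therefore
\begin{equation*}
\tau\,=\,\lim_{n\to\infty}\tau_n\,=\,\lim_{n\to\infty}\tau_n^*\,\leq\,\tau^*\,\leq\,\tau\,,
\end{equation*}
so all inequalities are equalities and $\tau=\lim_n\tau_n=\lim_n\tau_n^*=\tau^*$. Finally, by Theorem \ref{th-OT}(i) the value of \eqref{OT-moments} equals the value $\tau$ of \eqref{OT-1}, and the left-hand (resp. right-hand) problem displayed in \eqref{OT-dual-new} is exactly \eqref{OT-moments} (resp. \eqref{OT-dual}); hence their optimal values coincide, i.e. there is no duality gap between \eqref{OT-moments} and \eqref{OT-dual}.
\end{proof}
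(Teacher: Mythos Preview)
Your proof is correct and follows essentially the same route as the paper: combine $\tau_n=\tau_n^*$ (Lemma \ref{lem1}) with $\tau_n\uparrow\tau$ (Theorem \ref{th-hierarchy}) and weak duality $\tau^*\leq\tau$. You make explicit the inequality $\tau_n^*\leq\tau^*$ (via the observation that any feasible point of \eqref{relax-sos} yields a feasible $(q,g)$ for \eqref{OT-dual}), which the paper's two-line proof leaves implicit; this is a helpful clarification but not a different argument.
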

\begin{proof}
 By Lemma \eqref{lem1}, $\tau_n=\tau^*_n$ for all $n\geq n_0$, and by Theorem 
 \ref{th-hierarchy}, $\tau=\lim_{n\to\infty}\tau_n$. Therefore $\tau^*$ in \eqref{OT-dual}
 is equal to $\tau$, which yields \eqref{OT-dual-new}.
\end{proof}
Observe that \eqref{OT-dual-new} resembles the usual duality in optimal transport
when both marginals $\lambda_x$ and $\lambda_y$ are fixed; here the marginal $\lambda_y$ 
is also part of the optimization via the mixing measure $\phi$.

\begin{coro}
\label{cor2}
Let $S\subset\R\times \R_+$ be compact with nonempty interior and let
$\mu\in\mathscr{P}(\R)$ be such that \eqref{suff-cond} holds for some scalar $c>0$
and its support contains an open set. Then
$\mu$ is a mixture of Gaussians
with parameters $(m,\sigma)\in S$ if and only if 
for every $n\geq n_0$, $(q^*,g^*)=(0,0)$ and $\theta^*_j=0$ for all $j=0\ldots,s$,
is an optimal solution of \eqref{relax-sos}.
\end{coro}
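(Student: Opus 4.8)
The plan is to reduce the statement to the scalar identity $\tau=0$ and then propagate it down the hierarchy. First I would observe that the tuple $q=0$, $g=0$, $\sigma(x,y)=(x-y)^2$, $\theta_j=0$ for $j=0,\ldots,s$ is feasible for \eqref{relax-sos} at every level $n\ge n_0$: the equality $q(x)+g(y)+\sigma(x,y)=(x-y)^2$ is immediate, $(x-y)^2\in\Sigma[x,y]_n$ because $n_0\ge 1$, and the second equality reduces to $0=0$; its objective value is $\int q\,\d\mu=0$. Hence $\tau^*_n\ge 0$ for every $n\ge n_0$, and, since this trivial tuple is feasible with value $0$, it is an \emph{optimal} solution of \eqref{relax-sos} if and only if $\tau^*_n=0$. (One also has $\tau_n\ge 0$, because $\lambda((x-y)^2)=\langle\bell,\M_n(\blambda)\,\bell\rangle\ge 0$ where $\bell$ is the coefficient vector of $x-y$ and $\M_n(\blambda)\succeq 0$; together with weak duality $\tau^*_n\le\tau_n$ this confines $\tau^*_n$ to $[0,\tau_n]$, in particular it is finite.)

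Next I would invoke the standing hypotheses that $S$ has nonempty interior and $\mathrm{supp}(\mu)$ contains an open set. By Lemma \ref{lem1} there is then no duality gap, so $\tau_n=\tau^*_n$ for every $n\ge n_0$, and by Theorem \ref{th-hierarchy} (equivalently Corollary \ref{cor1}) the nondecreasing sequence $(\tau_n)_{n\ge n_0}$ converges up to $\tau$. Combining these facts with $0\le\tau_n\le\tau$, the trivial tuple is optimal for \eqref{relax-sos} for every $n\ge n_0$ if and only if $\tau^*_n=0$ for every $n\ge n_0$, if and only if $\tau_n=0$ for every $n\ge n_0$, if and only if $\tau=\sup_n\tau_n=0$. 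Finally, Theorem \ref{th-OT}(ii) states that $\tau=0$ if and only if $\mu=\nu_{\phi^*}$, i.e.\ $\mu$ is a mixture of Gaussians with parameters $(m,\sigma)\in S$; chaining this with the previous equivalence gives the claim.

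I do not expect a genuine difficulty here, since the corollary is essentially a restatement of Theorems \ref{th-OT}(ii) and \ref{th-hierarchy} through the dual SDP. The two points deserving care are: (a) checking that the all-zero tuple is actually feasible in \eqref{relax-sos}, so that ``is an optimal solution'' is equivalent to the clean condition $\tau^*_n=0$ rather than to something weaker; and (b) making sure the hypotheses ($S$ with nonempty interior, $\mathrm{supp}(\mu)$ containing an open set) are exactly those needed to apply Lemma \ref{lem1} for the zero duality gap $\tau_n=\tau^*_n$ and Theorem \ref{th-hierarchy} for the monotone convergence $\tau_n\uparrow\tau$, so that the single scalar fact $\tau=0$ transfers faithfully to every level of the hierarchy.
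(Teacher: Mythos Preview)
Your proposal is correct and follows essentially the same route as the paper: reduce the statement to $\tau=0$ via $\tau_n=\tau^*_n$ (Lemma \ref{lem1}) and $\tau_n\uparrow\tau$ (Theorem \ref{th-hierarchy}), then invoke Theorem \ref{th-OT}(ii). In fact you are more careful than the paper, which only writes out the forward direction explicitly; your chain of equivalences handles both directions cleanly, and your verification that the all-zero tuple (with $\sigma(x,y)=(x-y)^2$) is feasible in \eqref{relax-sos} fills in a detail the paper leaves implicit.
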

\begin{proof}
It $\mu$ is a Gaussian mixture with mixing measure $\phi^*\in\mathscr{P}(S)$,
then $\tau=W_2(\mu,\nu_{\phi^*})=0$.  As $0\leq\tau_n\leq\tau=0$ one obtains $\tau_n=\tau^*_n=0$ for all
$n\geq n_0$ and $(q,g)=(0,0)$ with $\theta^*_j=0$ for all $j=0,\ldots,s$, is an obvious optimal solution
of \eqref{relax-sos}.
\end{proof}

\subsection{Recognizing a Gaussian mixture}
As a consequence of Corollary \ref{cor2}, if the input probability measure $\mu$
is \emph{not} a mixture of Gaussian measures with parameters $(m,\sigma)\in S$, then the optimal value 
of \eqref{relax-mom} becomes positive at some step $n_*\geq n_0$ 
and then remains positive for all $n\ge n_*$ (as the sequence is monotone non decreasing). So the 
sequence of optimal values $(\tau_n)_{n\in\N}$  of the hierarchy \eqref{relax-mom} 
permits to detect in finitely many steps if $\mu$ is not a Gaussian mixture (with parameters $(m,\sigma)\in S$).

Recall that
$d_j:=\lceil\mathrm{deg}(u_j)/2\rceil$ and let $v:=\max_jd_j$.

\begin{thm}
\label{mu-mixture}
With $S\subset\R^2$ as in \eqref{def-S}, let $\mu\in\mathscr{P}(\R)$ be a given probability measure 
with finite moments $\bmu=(\mu_{(i,j)})_{(i;j)\in\N^2}$, and let $\tau$ and $\tau_n$ be as in \eqref{OT-1} and \eqref{relax-mom} respectively. 

(i) $\mu$ is a mixture of Gaussian measures, all with parameters $(m,\sigma)\in S$, if and only if 
$(\mu\otimes\mu,\phi^*)$ is  an optimal solution of \eqref{OT-1} for some $\phi^*\in \mathscr{P}(S)$.
Moreover, $\tau_{n}=\tau=0$ for all $n\geq n_0$, i.e., the optimal value $0$ is obtained at every step 
of the hierarchy of semidefinite relaxations \eqref{relax-mom}.

In addition, if $\mu$ is a mixture of finitely many (say $r$) Gaussian measures, all with parameters $(m,\sigma)\in S$, 
then for every $n$ sufficiently large, the corresponding degree-$2n$ vector of moments $(\blambda^*,\bphi^*)$
respectively associated with $\mu\otimes\mu$ and $\phi^*$, is an optimal solution of \eqref{relax-mom} and
\begin{equation}
\label{curto}
\mathrm{rank}(\M_n(\bphi^*))\,=\,\mathrm{rank}(\M_{n-v}(\bphi^*))\,=\,r\,.\end{equation}

(ii) Conversely, let $(\blambda^*,\bphi^*)$ be an optimal solution of some degree-$2n$ relaxation
\eqref{relax-mom} with $\tau_n=0$, and suppose that \eqref{curto} holds.  Then $\bphi^*$ is 
the degree-$2n$  moment vector of some $r$-atomic probability measure $\phi^*$ on $S$. Moreover,
$\mu=\nu_{\phi^*}$ (i.e. $\mu$  is a Gaussian mixture with mixing measure $\phi^*$)  
if and only if 
\begin{equation}
\label{check-1}
\mu_j\,=\,\int p_j(m,\sigma)\,\d\phi^*\,,\quad \forall j\,>\,n+1\,.\end{equation}
\end{thm}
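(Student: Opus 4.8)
The plan is to deduce everything from the results already established — Theorems~\ref{th-OT} and \ref{th-hierarchy} and Lemma~\ref{lem1} — together with the Curto--Fialkow flat extension theorem \cite[Theorem~3.11]{CUP}.

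\emph{Part (i).} First I would note that $\mu$ is a mixture of Gaussians with parameters in $S$ exactly when $\nu_{\phi^*}=\mu$ for some $\phi^*\in\mathscr{P}(S)$, i.e.\ exactly when $W_2(\mu,\nu_{\phi^*})^2=0$ for some such $\phi^*$; since $\tau\ge 0$ this is equivalent to $\tau=0$, and by Theorem~\ref{th-OT}(ii) to the assertion that some optimal solution of \eqref{OT-1} has the form $(\mu\otimes\mu,\phi^*)$ with $\nu_{\phi^*}=\mu$ — which is the stated equivalence. When $\tau=0$, Lemma~\ref{lem1} forces $0\le\tau_n\le\tau=0$, so $\tau_n=0$ for all $n\ge n_0$. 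For the finite case, write $\mu=\sum_{k=1}^r w_k\,\mathcal{N}(m_k,\sigma_k)$ with distinct $(m_k,\sigma_k)\in S$ and $w_k>0$, and set $\phi^*:=\sum_k w_k\,\delta_{(m_k,\sigma_k)}$, an $r$-atomic measure with $\nu_{\phi^*}=\mu$. The degree-$2n$ moment vectors $(\blambda^*,\bphi^*)$ of $\mu\otimes\mu$ (the coupling of $\mu$ with itself carried by the diagonal $\{x=y\}$) and of $\phi^*$ satisfy all constraints of \eqref{relax-mom} (the moment constraints because $\lambda^*_{(j,0)}=\lambda^*_{(0,j)}=\mu_j=\int p_j\,\d\phi^*$, the positivity ones because $\mu\otimes\mu$ is a genuine measure and $\phi^*$ is supported on $S$) and have objective $\lambda^*((x-y)^2)=0=\tau_n$, hence are optimal for \eqref{relax-mom}. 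Finally, $\M_N(\bphi^*)=\sum_{k=1}^r w_k\,\v_N(m_k,\sigma_k)\,\v_N(m_k,\sigma_k)^T$ has rank $\le r$ for every $N$, with equality once the $r$ vectors $\v_N(m_k,\sigma_k)$ are linearly independent, which holds for all $N$ large (distinct points of $\R^2$ are separated by polynomials of bounded degree); choosing $n$ with $n-v$ that large makes $\mathrm{rank}\,\M_{n-v}(\bphi^*)=\mathrm{rank}\,\M_n(\bphi^*)=r$, i.e.\ \eqref{curto}.

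\emph{Part (ii).} Given an optimal $(\blambda^*,\bphi^*)$ of \eqref{relax-mom} with $\tau_n=0$ and \eqref{curto}, the first step is to extract $\phi^*$. Since $\M_{n-v}(\bphi^*),\dots,\M_n(\bphi^*)$ are nested principal submatrices of a positive semidefinite matrix their ranks are nondecreasing, so \eqref{curto} gives in particular $\mathrm{rank}\,\M_n(\bphi^*)=\mathrm{rank}\,\M_{n-1}(\bphi^*)$; combined with $\M_{n-d_j}(u_j\cdot\bphi^*)\succeq0$ for $j=0,\dots,s$ and $v=\max_jd_j$, the flat extension theorem \cite[Theorem~3.11]{CUP} yields a unique $r$-atomic representing measure $\phi^*$ for $\bphi^*$, supported on $S$; and the constraints of \eqref{relax-mom} force $\phi^*_{(0,0)}=\lambda^*_{(0,0)}=\mu_0=1$, so $\phi^*\in\mathscr{P}(S)$. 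The second step relates $\mu$ to $\nu_{\phi^*}$: since $\tau_n=0$, the coefficient vector $\h_0$ of the polynomial $x-y$ satisfies $\langle\h_0,\M_n(\blambda^*)\h_0\rangle=\lambda^*((x-y)^2)=0$, so $\M_n(\blambda^*)\succeq0$ forces $\M_n(\blambda^*)\h_0=0$, i.e.\ $\lambda^*_{(i+1,j)}=\lambda^*_{(i,j+1)}$ for every $(i,j)\in\N^2_n$; chaining these equalities along each fixed total degree $d\le n+1$ and using $\lambda^*_{(d,0)}=\mu_d$ yields $\lambda^*_{(i,j)}=\mu_{i+j}$ whenever $i+j\le n+1$. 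In particular $\int p_j\,\d\phi^*=\lambda^*_{(0,j)}=\mu_j$ already holds for $0\le j\le n+1$, so $\mu$ and $\nu_{\phi^*}$ have the same moments if and only if \eqref{check-1} holds; and since $\nu_{\phi^*}$ (rather than $\mu$) is a mixture of finitely many Gaussians with parameters in the compact $S$, it has uniformly bounded exponential moments and is hence moment determinate, so \eqref{check-1} forces $\mu=\nu_{\phi^*}$ (the converse being immediate).

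\emph{Expected main obstacle.} The delicate point is the correct invocation of the flat extension theorem: one must verify that the rank equality \eqref{curto} with $v=\max_jd_j$ is precisely the hypothesis that simultaneously produces an atomic representing measure for $\bphi^*$ and confines its support to $S$. On the transport side the only subtlety is the degree bookkeeping in the ``diagonalization'' $\lambda^*_{(i,j)}=\mu_{i+j}$, which propagates only up to total degree $n+1$ and thereby explains the threshold $j>n+1$ in \eqref{check-1}.
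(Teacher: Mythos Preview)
Your proof is correct and follows essentially the same route as the paper's: the equivalence in (i) via $\tau=0$, the sandwich $0\le\tau_n\le\tau$ for the relaxation values, the rank stabilization of $\M_n(\bphi^*)$ for an $r$-atomic $\phi^*$, the flat extension theorem for (ii), and the kernel argument $\M_n(\blambda^*)\h_0=0$ to recover $\mu_j=\int p_j\,\d\phi^*$ for $j\le n+1$. Two small points where you are actually more careful than the paper: you spell out the chaining $\lambda^*_{(i+1,j)}=\lambda^*_{(i,j+1)}$ explicitly (the paper just says ``the second and third columns of $\M_n(\blambda^*)$ are identical''), and you invoke moment determinacy of $\nu_{\phi^*}$ rather than of $\mu$, which is cleaner since the theorem statement does not assume \eqref{suff-cond} on $\mu$ whereas determinacy of a finite mixture of Gaussians with parameters in a compact $S$ is automatic.
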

\begin{proof}
(i) \emph{ Only if part:} By definition there exists $\phi^*\in\mathscr{P}(S)$ such that
 \[\mu(B)\,=\,\int_S\left(\frac{1}{\sqrt{2\pi}\sigma}\,\int_B \exp(\frac{-(x-m)^2}{2\sigma^2})\,\dx\right)\,\d\phi^*(m,\sigma)\,,\quad\forall B\in \mathcal{B}(S)\,.\]
 Then $\tau=W_2(\mu,\nu_{\phi^*})=0$, and with $\lambda^*:=\mu\otimes\mu$,
 the couple $(\lambda^*,\phi^*)$ is an obvious optimal solution of \eqref{OT-1}. Moreover, $\tau_n=0$ for all $n$, follows from $0\leq\tau_n\leq\tau$ and $\tau=0$.

\emph{If part:} If $(\mu\otimes\mu,\phi^*)$ is an optimal solution of \eqref{OT-1} then 
$\mu=\lambda_y=\nu_{\phi^*}$, i.e., $\mu$ is a Gaussian mixture with mixing measure $\phi^*\in\mathscr{P}(S)$, and $\tau=0=W_2(\mu,\nu_{\phi^*})^2$.
Next, fix $n\geq n_0$ arbitrary. The finite vector of degree-$2n$ moments $(\blambda^*,\bphi^*)$ of $\lambda^*=\mu\otimes\mu$ and $\phi^*$ respectively, is an obvious 
feasible solution of \eqref{relax-mom}. Moreover $\lambda^*((x-y)^2)=\int (x-y)^2\d\mu(x)\,\d\mu(y)=0$,
and as $\tau_{n}\geq0$, $(\blambda^*,\bphi^*)$ is an optimal solution of \eqref{relax-mom} with $\tau_n=0$.

Next, as $\phi^*$ is $r$-atomic, $\mathrm{rank}(\M_n(\bphi^*))=r$ for all sufficiently large $n$. As 
$\M_{n-v}(\bphi^*)$ is a submatrix of $\M_n(\bphi^*)$, \eqref{curto} follows. 

(ii) Conversely, if \eqref{check-1} holds at an optimal solution
of a degree-$2n$ relaxation \eqref{relax-mom},
then by Curto \& Fialkow's flat extension theorem \cite[Theorem 2.47]{CUP-2}, $\bphi^*$ is the degree-$2n$ moment sequence of 
some $r$-atomic $\phi^*\in\mathscr{P}(S)$. Next, $\tau_n=0$ implies
$\lambda^*((x-y)^2)=0$ and so the vector $\p\in\R^{{2+n\choose 2}}$ of coefficients of the polynomial 
$(x,y)\mapsto p(x,y):=(x-y)\in\R[x,y]_n$ is in the kernel of $\M_n(\blambda^*)$ as
\[\langle \p,\M_n(\blambda^*)\,\p\rangle\,=\,\lambda^*(p^2)\,=\,\lambda^*(x-y)^2)\,=\,0\,.\]
That is, the second and third columns of $\M_n(\blambda^*)$ (respectively indexed by the monomials 
$x$ and $y$) are identical. In particular, this implies $\lambda^*_{(j,0)}=\lambda^*_{(0,j)}$ for all $j=0,\ldots n+1$.
Equivalently $\mu_j=(\nu_{\phi^*})_j$ for all $j\leq n+1$, and therefore as $\mu$ is determinate,
$\mu=\nu_{\phi^*}$ (and so $W_2(\mu,\nu_{\phi^*})=0$) if only if $\mu_j=\lambda^*_{(0,j)}$ for all $j$, and so if and only if \eqref{check-1} holds. 
\end{proof}
 
  The sufficient Curto \& Fialkow's flatness condition \eqref{curto} in Theorem \ref{mu-mixture} is very useful to detect whether $\mu$ is a
  Gaussian mixture $\nu_{\phi^*}$ with an $r$-atomic mixing measure $\phi^*$ on $S$, in solving \emph{finitely many} 
  semidefinite relaxations. Indeed if \eqref{curto} holds then it remains to check whether \eqref{check-1} holds (with no optimization involved).
 
\begin{ex}
 \label{mu-not-mixture}
 Let $S=[.07,1]\times [.02,1]$ and $\mu=r*\mathcal{N}(.1,.2)+(1-r)*\mathcal{N}(.5,.5)$ with $r\in (0,1)$.
 Then with $r=.2$ or $r=.3$, the atomic measure $\phi^*=r*\delta_{(.1,.2)}+(1-r)*\delta{(.5,.5)}$
 is detected at step $n=6$ of the semidefinite relaxation \eqref{relax-mom}.
  Indeed, in its degree-$12$
 optimal solution $(\blambda^*,\bphi^*)$ obtained by running the 
 GloptiPoly software \cite{GloptiPoly} that implements the Moment-SOS hierarchy, $\bphi^*$ satisfies 
 the flatness condition \eqref{curto}, and the atoms can be extracted
 by a linear algebra subroutine. However we could notice that if we enlarge the set $S$, then 
  one needs to go to higher degrees in the hierarchy with potential numerical instabilities.
 \end{ex}

\section{The multivariate case}
\label{multivariate}
The result in the univariate case 
extends to the multivariate case with $\mu$ on $\R^d$, provided that the set of parameters
$(\m,\bSigma)\in S\subset\R^d\times \R^{d(d+1)/2}$ is a compact basic semi-algebraic set.
For  instance one may consider the case where $(\m,\bSigma)\in S$ with
\[S\,:=\,\{\,(\m,\bSigma): \:a\,\mathbf{I}\,\preceq \bSigma\,\preceq\,b\,\mathbf{I}\,;\quad g_j(\m)\,\geq\,0\,,\: j=1,\ldots,s\,\}\,,\]
for some polynomials $g_j\in\R[m_1,\ldots,m_d]$, $j=1,\ldots,s$, and
 some given scalars $0<a<b$.  Then using determinants of $\Sigma=(\sigma_{ij})_{i,j\leq d}$,
 the constraints $a\,\mathbf{I}\,\preceq \bSigma\,\preceq\,b\,\mathbf{I}$ reduces to
 $2d$ polynomials inequality constraints $q_k(\bsigma)\geq0$, $k=1,\ldots,2d$, with two of them
 of degree $d$. Then the set
 \begin{equation}
 \label{def-set-S}
 S\,=\,\{\,(\m,\bsigma)\,:\: g_j(\m)\,\geq\,0\,,\:j=1,\ldots,s\,;\: q_k(\bsigma)\,\geq\,0\,,\:k=1,\ldots,2d\,\}\subset\R^d\times\R^{d(d+1)/2}\,.\end{equation}
 As $S$ is compact and assuming one knows a scalar $R>0$ such that 
 \[R^2-\Vert\m\Vert^2-\Vert\bsigma\Vert^2\geq\,0\,,\quad\forall (\m,\bsigma)\,\in\, S\,,\]
 we may add the redundant quadratic constraint $R^2-\Vert\m\Vert^2-\Vert\bsigma\Vert^2\geq0$
 (relabelled as $g_1(\m,\bsigma)\geq0$)
 in the definition \eqref{def-set-S} of $S$ without changing $S$. In doing so, the quadratic module
 \begin{equation}
 \label{quad-module-multi}
 Q(g,q)\,=\,\{\,\sum_{j=0}^s\theta_j\,g_j+\sum_{k=1}^{2d}\theta'_k\,q_k\,:\: \theta_j,\theta'_k\in\Sigma[\m,\bsigma]\,\}\,\end{equation}
 is Archimedean. Next, as in the univariate case one introduces the polynomials $(p_{\balpha}\in\R[\m,\bsigma]_{\vert\balpha\vert})_{\balpha\in\N^d}$ defined by:
 \begin{equation}
 \label{def-pbalpha}
% (\m,\bsigma)\mapsto 
 p_{\balpha}(\m,\bsigma)\,:=\,\frac{1}{(2\pi)^{d/2}\sqrt{\mathrm{det}(\bSigma)}}\int \x^{\balpha}\exp(-(\x-\m)^T\bSigma^{-1}(\x-\m)/2)\,\d\x\,,
 \end{equation}
for every $\balpha\in\N^d$.
 Indeed every moment $\int \x^{\balpha}\,\d\nu$ of a Gaussian probability measure $\nu=\mathcal{N}(\m,\Sigma)$, is an explicit polynomial
 of its parameters $(\m,\bsigma)$, of total degree at most $j$. Moreover, the marginal of a Gaussian measure $\mu=\mathcal{N}(\m,\bSigma)$  with respect to $x_i$ is the Gaussian measure $\mathcal{N}(m_i,\Sigma_{ii})$. Therefore
  \begin{equation}
  \label{p2j-multi}
  \int x^{2j}_i\,\d\mu\,=\,p_{2j}(m_i,\Sigma_{ii})\,,\quad\forall j\in\N\,;\: i=1,\ldots,d\,,
  \end{equation}
 where  $p_{2j}$ has been defined in \eqref{lem1-1}.
 Next, if $\mu\in\mathscr{P}(\R^d)$,
 the multivariate analogue of \eqref{OT-1} reads:
 
\begin{eqnarray}
\nonumber
\tau&=&\displaystyle\inf_{\phi\in\mathscr{P}(S)}W_2(\mu,\nu_\phi)^2\\
&=&
\label{OT-multi-1}
\displaystyle\inf_{\phi\in\mathscr{P}(S),\lambda\in\mathscr{P}(\R^{2d})}\,\{\,\displaystyle\int \Vert \x-\y\Vert^2\,\d\lambda: \:\lambda_{\x}\,=\,\mu\,;\:\lambda_{\y}\,=\,\nu_\phi\}\,.
\end{eqnarray}
and the analogue of the moment  formulation \eqref{OT-moments} reads:
\begin{equation}
\label{OT-mult-2}
\begin{array}{rl}
\displaystyle\inf_{\phi\in\mathscr{P}(S),\lambda\in\mathscr{P}(\R^{2d})}&
\{\,\displaystyle\int \Vert \x-\y\Vert^2\,\d\lambda:\:
\lambda_{\balpha 0}\,=\,\mu_{\balpha}\,,\:\forall\balpha\in\N^d\,;\\
&
\lambda_{0\balpha}-\phi(p_{\balpha}(\m,\bsigma))\,=\,0\,,\quad\forall\,\balpha\in\N^d\,\}\,.
\end{array}
\end{equation}

\begin{ass}
\label{ass-1}
(i) The measure $\mu$ satisfies: $\sup_i\int \exp(c\,\vert x_i\vert)\,\d\mu <\infty$ for some $c>0$.

(ii) The set $S$ in \eqref{def-set-S} is compact with nonempty interior, and
the quadratic module \eqref{quad-module-multi} is Archimedean.
\end{ass}

\begin{thm}
\label{th-OT-multi}
Let Assumption \ref{ass-1} hold. Then:

(i) The optimal transport problem \eqref{OT-multi-1} has an optimal solution $(\phi^*,\lambda^*)\in\mathscr{P}(S)\times\mathscr{P}(\R^{2d})$ which is also an optimal solution of \eqref{OT-mult-2}. Moreover both measures
$\lambda^*\in\mathscr{P}(\R^{2d})$ and $\nu_{\phi^*}\in\mathscr{P}(\R^d)$ are moment determinate.

(ii) Moreover, $\tau=0$ if and only if $\lambda^*=\mu\otimes\mu
$ and $\mu=\nu_{\phi^*}$,  i.e., $\mu$ is a Gaussian mixture with $\phi^*$ a mixing measure of parameters 
$(\m,\bSigma)\in S$.
\end{thm}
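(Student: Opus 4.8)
The plan is to reduce Theorem \ref{th-OT-multi} to the already-proven univariate case by a marginal-by-marginal argument, exactly as the multivariate section announces. First I would establish existence of an optimal $(\phi^*,\lambda^*)$ for \eqref{OT-multi-1}. The feasible set is nonempty (take $\phi$ uniform on the compact $S$), and since $S$ is compact and every $p_{\balpha}$ is a polynomial bounded on $S$, the family $\{\nu_\phi : \phi\in\mathscr{P}(S)\}$ has uniformly bounded moments of every order; in fact, applying \eqref{p2j-multi} together with the estimate of Corollary \ref{coro1} coordinate-wise shows $\nu_\phi$ satisfies a bivariate-Carleman-type bound in each variable $x_i$ uniformly in $\phi$. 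Combined with Assumption \ref{ass-1}(i) for $\mu$, the transport plans $\lambda$ then have uniformly bounded second moments, so by Prokhorov the pair $(\phi,\lambda)$ ranges over a tight (hence relatively weakly compact) set; the functional $\lambda\mapsto\int\|\x-\y\|^2\d\lambda$ is lower semicontinuous for weak convergence, and the marginal constraints $\lambda_{\x}=\mu$, $\lambda_{\y}=\nu_\phi$ pass to the limit (the map $\phi\mapsto\nu_\phi$ is weakly continuous on $\mathscr{P}(S)$ because each $p_{\balpha}$ is bounded continuous on $S$). This yields an optimizer.

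Next I would show this optimizer also solves the moment formulation \eqref{OT-mult-2} and, conversely, that any solution of \eqref{OT-mult-2} gives a solution of \eqref{OT-multi-1}; this is the heart of part (i). One direction is immediate: an optimal solution of \eqref{OT-multi-1} clearly satisfies the moment constraints of \eqref{OT-mult-2} with the same objective value, and $\tau_{\text{mom}}\le\tau$. For the reverse inequality, given $(\phi,\lambda)$ feasible for \eqref{OT-mult-2}, the sequence $(\lambda_{0\balpha})_{\balpha} = (\phi(p_{\balpha}))_{\balpha}$ is, by \eqref{p2j-multi} and the uniform bound above, the moment sequence of a measure satisfying the multivariate Carleman condition (Theorem \ref{th-carleman}, applied in each pair of variables), hence determinate and equal to $\nu_\phi$; likewise the constraints force $\lambda$ to have $\mu$ as first marginal and $\nu_\phi$ as second marginal, and $\mu$ is determinate by Assumption \ref{ass-1}(i). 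So every feasible point of \eqref{OT-mult-2} is genuinely a transport plan between $\mu$ and a Gaussian mixture, giving $\tau\le\tau_{\text{mom}}$; equality of the two problems and of their optimal solutions follows. Moment determinacy of $\lambda^*$ on $\R^{2d}$ comes from the same Carleman argument: its $\x$-marginal is $\mu$ (Carleman in $x_1,\dots,x_d$) and its $\y$-marginal is $\nu_{\phi^*}$ (Carleman in $y_1,\dots,y_d$), and a product-type Carleman bound in all $2d$ variables then applies.

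For part (ii): if $\mu=\nu_{\phi^*}$ is itself a Gaussian mixture with parameters in $S$, then $\lambda^*:=\mu\otimes\mu$ is admissible with objective $\int\|\x-\y\|^2\,\d(\mu\otimes\mu) = 0$ only when $\mu$ is a point mass — so instead I take $\lambda^*$ to be the diagonal coupling $(\mathrm{id},\mathrm{id})_\#\mu$, which has $\int\|\x-\x\|^2\d\mu = 0$, forcing $\tau=0$. Conversely, if $\tau=0$ then $\int\|\x-\y\|^2\,\d\lambda^*=0$ forces $\lambda^*$ to be supported on the diagonal $\{\x=\y\}$, whence its two marginals coincide: $\mu=\nu_{\phi^*}$, i.e.\ $\mu$ is a Gaussian mixture with mixing measure $\phi^*\in\mathscr{P}(S)$. (A small wording caveat: the statement writes "$\lambda^*=\mu\otimes\mu$", which should read as the diagonal coupling; this is only an issue when $\mu$ is nonatomic, and I would phrase it via the diagonal.) The main obstacle is the uniform-in-$\phi$ Carleman/moment bound needed for both the compactness step and the determinacy step — everything else is a routine transcription of the univariate arguments of Section \ref{univariate} — but Corollary \ref{coro1} combined with \eqref{p2j-multi} already supplies precisely the coordinate-wise growth bound required, so this obstacle is surmounted by applying that corollary in each of the $d$ coordinates.
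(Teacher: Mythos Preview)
Your proposal is correct and reaches the same conclusion, but the mechanism you use to produce the optimal $\lambda^*$ differs from the paper's. The paper (mirroring its univariate proof of Theorem~\ref{th-OT}) works entirely at the level of \emph{moment sequences}: from a minimizing sequence it extracts, via the scaling trick into the unit ball of $\ell_\infty$ and Banach--Alaoglu, a pointwise limit sequence $\blambda^*$, and only then invokes the multivariate Carleman condition (fed by Corollary~\ref{coro1} via \eqref{p2j-multi} in each coordinate) to show $\blambda^*$ is the moment sequence of a determinate measure $\lambda^*$; the objective, being a polynomial moment, converges \emph{exactly} along the subsequence. You instead work at the level of \emph{measures}: uniform second-moment bounds give tightness, Prokhorov gives a weak limit $\lambda^*$ directly, and you close with lower semicontinuity of $\lambda\mapsto\int\Vert\x-\y\Vert^2\,\d\lambda$. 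Your route is the standard optimal-transport argument and is arguably more elementary; the paper's route is tailored to the moment-problem machinery used throughout and has the minor advantage that the objective converges rather than merely being l.s.c. Both approaches rely on the same coordinate-wise Carleman ingredient for determinacy and for the equivalence with \eqref{OT-mult-2}.

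Two small remarks. First, your justification of weak continuity of $\phi\mapsto\nu_\phi$ (``because each $p_{\balpha}$ is bounded continuous on $S$'') is not quite the right test; you should check it against bounded continuous $f$, for which $(\m,\bSigma)\mapsto\int f\,\d\mathcal{N}(\m,\bSigma)$ is continuous on the compact $S$ (or, alternatively, argue moment convergence plus determinacy of the limit). Second, your caveat about ``$\lambda^*=\mu\otimes\mu$'' is well taken: the paper writes the product measure where the diagonal coupling $(\mathrm{id},\mathrm{id})_\#\mu$ is meant, and your correction is the right reading.
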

\paragraph{Sketch of the proof}
As in the proof of Theorem \ref{th-OT} in the univariate case let $(\lambda^{(n)},\phi^{(n)})_{n\in\N}$ be a minimizing sequence of \eqref{OT-multi-1}.
As $S$ is compact there exists a subsequence $(n_k)_{k\in\N}$ and a probability measure $\phi^*\in\mathscr{P}(S)$
such that $\phi^{(n_k)}\Rightarrow\phi^*$ as $k\to\infty$. 

Let $d'=d+d(d+1)/2$ and recall that $S\subset\R^{d'}$. Following exactly the same steps as in the proof of Theorem \ref{th-OT}, there exists a subsequence
denoted $(n'_\ell)_{\ell\in\N}$  and an infinite sequence $\blambda^*=(\lambda^*_{\balpha})_{\balpha\in\N^{2d}}$, such that
\[\lim_{\ell\to\infty}\lambda^{(n'_\ell)}_{\balpha}\,=\,\lambda^*_{\balpha}\,,\quad\forall\balpha\in\N^{2d}\,;\quad
\lim_{\ell\to\infty}\phi^{(n'_\ell)}_{\bbeta}\,=\,\phi^*_{\bbeta}\,,\quad\forall\bbeta\in\N^{d'}\,.\]
Moreover as $S$ is compact and in view of \eqref{p2j-multi},
and \eqref{aux-2j-1}-\eqref{aux-2j-2},
and by Corollary \ref{coro1},
\[\sum_{j=1}^\infty \phi^*(p_{2j}(m_i,\Sigma_{ii}))^{-1/2j}\,=\,+\infty\,,\quad\forall i=1,\ldots,d\,.\]
and therefore
\[\sum_{j=1}^\infty \lambda^*(y_i^{2j})^{-1/2j}\,=\,+\infty\,,\quad\forall i=1,\ldots,d\,.\]
Next, by Assumption \ref{ass-1}(i) on $\mu$, one also has
\[\sum_{j=1}^\infty \lambda^*(x_i^{2j})^{-1/2j}\,=\,\sum_{j=1}^\infty \mu(x_i^{2j})^{-1/2j}\,=\,
+\infty\,,\quad\forall i=1,\ldots,d\,,\]
and therefore the moment sequence $\blambda^*$ satisfies multivariate Carleman's condition
(see e.g. \cite[Proposition 2.37]{CUP-2}), which in turn implies 
that it is the moment sequence of some measure $\lambda^*\in\mathscr{M}(\R^{2d})_+$ which is moment determinate.
Then again as in the proof of Theorem \ref{th-OT} we may conclude that $(\lambda^*,\phi^*)$ is an optimal solution of 
\eqref{th-OT-multi}.\qed \\

Next, let $d_j=\lceil\mathrm{deg}(g_j)/2\rceil$ and $t_k=\lceil\mathrm{deg}(q_k)/2\rceil$, for all $j$ and $k$.
Then for every $n\geq n_0=\max_{j,k}[ \,d_j\,,\,t_k\,]$,
the multivariate analogue of the semidefinite relaxation \eqref{relax-mom} 
reads:
\begin{equation}
\label{OT-mult-3}
\begin{array}{rl}
\tau_n\,=\,\displaystyle\inf_{\bphi\,,\blambda}\,\{\,\displaystyle\int \Vert \x-\y\Vert^2\,d\lambda:&
\lambda_{\balpha,0}\,=\,\mu_{\balpha}\,;\:
\lambda_{0,\balpha}-\phi(p_{\balpha}(\m,\bsigma))\,=\,0\,,\quad\forall\,\balpha\in\N^d_{2n}\,;\\
&\M_n(\blambda)\,,\:\M_n(\bphi)\,\succeq0\,;\\
&\M_{n-d_j}(g_j\cdot\bphi)\,,\:\M_{n-t_k}(q_k\cdot\bphi)\succeq0\,;\\
&j=1,\ldots,s\,;\:k=1,\ldots,2d\:\}\,.
\end{array}
\end{equation}
Then an analogue of Theorem \ref{th-hierarchy} holds and its proof is along the same lines.
Also Curto \& Fialkow's flatness condition \cite[Theorem 2.47]{CUP-2} is also valid in the multivariate setting.
Similarly  there is an exact analogue of Theorem \ref{mu-mixture}.

\section{Conclusion}

We have considered Gaussian mixtures (with parameters $(m,\sigma)$  in a given compact set $S$) 
closest in Wasserstein distance, to a given measure $\mu$. Such Gaussian mixtures are
optimal solutions of an infinite-dimensional optimal-transport linear program (LP)
in which one marginal constraint contains the unknown  mixing measure. Non-uniqueness
is related to a classical identifiably issue. This LP
can be solved by the Moment-SOS hierarchy, i.e.,  a sequence of semidefinite programs (convex relaxations) whose size increases with the number of moment constraints considered. That $\mu$ cannot be a Gaussian mi	omic mixing measure on $S$ with finite support, a latter can sometimes be extracted from an optimal solution at some step of the hierarchy. In addition to the identifiability issue, an interesting research direction is concerned with whether 
a similar approach can be  implemented when the distance is now measured in total variation instead of Wasserstein.

 \section{Appendix}
 \label{appendix}
 In this paper we mainly use the $W_2(\mu,\nu)$-optimal transport problem  \eqref{def-distance-0}
 for two probability measures $\mu$ and $\nu$, but we could also use the
 $W_1(\mu,\nu)$-optimal transport problem. Its  primal formulation reads
 \[W_1(\mu,\nu)\,=\,\displaystyle\inf_{\lambda\in\mathscr{P}(\R^2)}\,\{\,\int_{\R^2}\vert x-y\vert\,\d\lambda(x,y):\:
 \lambda_x\,=\,\mu\,;\:\lambda_y\,=\,\nu\,\},\]
 while its dual formulation reads
 \[W_1(\mu,\nu)\,=\,\displaystyle\sup_{f,g}\,\{\,\int_{\R^2}f(x)\,\d\mu(x)+\int g(y)\,\d\nu(y):\:
 f(x)+g(y)\,\leq\,\vert x-y\vert\,,\quad \forall x,y\,\in\R\,\}\,.\]
 In order to proceed in a manner similar as for the $W_2$-distance, we need to 
 write $\R^2=\overline{X_1\cup X_2}$ with $X_1:=\{(x,y):x < y\}$ and $X_2:=\{(x,y): x> y\}$, and impose $\lambda=\lambda_1+\lambda_2$  with $\mathrm{supp}(\lambda_1)=\overline{X_1}$ and 
 $\mathrm{supp}(\lambda_2)=\overline{X_2}$.
 \subsection{Proof of Theorem \ref{th-OT}}
 \begin{proof}
(i)  Let $(\lambda^{(n)},\phi^{(n)})_{n\in\N}\subset \mathscr{P}(\R^2)\times\mathscr{P}(S)$ be a minimizing sequence of \eqref{OT-1} with $\rho_n:=W_2(\mu-\nu_{\phi^{(n)}}) 
 \downarrow \tau$ as $n$ increases. As $S$ is compact, the sequence $(\phi^{(n)})_{n\in\N}$ is tight and by Prohorov's theorem, there exists a subsequence $(n_k)_{k\in\N}$ and a probability measure $\phi^*\in\mathscr{P}(S)$ such that
 \[\lim_{k\to\infty}\int h\,\d\phi^{(n_k)}\,=\,\int h\,\d\phi^*\,,\quad\forall h\in\mathscr{C}(S)\quad\mbox{[denoted $\phi_{n_k}\Rightarrow\phi^*$]}\,.\]
 In particular, $\phi^{(n_k)}_{(i,j)}\to \phi^*_{(i,j)}$ for all $(i,j)\in\N^2$. In addition, as $p_j\in\R[m,\sigma]$,
 \begin{equation}
 \label{aux-1}
 \lim_{k\to\infty}\lambda^{(n_k)}_{(0,j)}\,=\,
  \lim_{k\to\infty}\int p_j\,d\phi^{(n_k)}
 \,=\,\int p_j\,d\phi^*\,,\quad\forall j\in\N\,,\end{equation}
 and by feasibility, we also have 
 \[\lim_{k\to\infty}\lambda^{(n_k)}_{(j,0)}\,=\,\mu_j\,,\quad\forall j\in\N\,.\]
 We want to prove that
  \[\forall i,j\in\N\,:\quad \lim_{k\to\infty}\lambda^{(n_k)}_{(i,j)}\,=\,\int x^i y^i\,\d\lambda^*(x,y)\,,\]
  for some determinate measure $\lambda^*$ on $\R^2$. That is,
 the vector of moments $\blambda^{(n_k)}$ converges to the vector of moments of 
 $\lambda^*$, and in particular 
 \[\lambda^*_{(j,0)}\,=\,\mu_j\,\quad \forall j\in\N\,;\quad\lambda^*_{(0,j)}\,=\,\int_Sp_j\,\d\phi^*\,=\,\int_{\R}x^j\,\d\nu_{\phi^*},\quad \forall j\in\N\,.\]
Notice that then  $(\lambda^*,\phi^*)$ is an optimal solution of \eqref{OT-1}.
 
 As $S$ is compact, $\vert m\vert<M$ and $\sigma<M$ for some $M>0$ and therefore by \eqref{aux-2j-1},
 \[\lambda^{(n)}_{(0,2j)}=\,\phi^{(n)}(p_{2j})\quad\Rightarrow
 \lambda^{(n)}_{(0,2j)}\,=\,\int_S p_{2j}\,\d\phi^{(n)}
  \,<\,(2Mj)^{2j}  \,=:\,\rho_j,\quad \forall j\in\N\,.\]
 This combined with $\lambda^{(n)}_{(2j,0)}=\mu_{2j}$ yields that the moment matrix $\M_k(\blambda^{(n)})$ of the moment sequence $\blambda^{(n)}$ of the measure $\lambda^{(n)}$ satisfies $\M_k(\blambda^{(n)})\succeq0$ for every $k$, and
 
 \[\forall (k,\ell)\in\N^2\mbox{ with $k+\ell\leq 2j$:}\quad \vert\lambda^{(n)}_{k,\ell}\vert\,\leq \,\max[1,\mu_{2j},\rho_j\,]\,=:\,\rho'_j,\quad\forall j\in N\,.\]
 See \cite[Proposition 3.6, p. 60]{CUP}.
 Then define the new infinite sequence $\hat{\blambda}^{(n)}$ %and $\hat{\bphi}^{(n)}$ 
 by
\begin{equation}
\label{scaling-1}
\begin{array}{rcl}
\hat{\lambda}^{(n)}_{(i,j)}&:=&\lambda_{(i,j)}/\rho'_k\,,\quad\forall (i,j)\mbox{ with }2k<i+j\leq 2k\,,\quad k=1,\ldots,\\
\end{array}
\end{equation}
so that $\hat{\blambda}^{(n)}$ is an element of the unit ball of $\ell_\infty$, the Banach space of (uniformly) bounded sequences.
As the unit ball $\B_{\ell_\infty}(0,1)$ of $\ell_\infty$ is 
sequentially compact in the weak-star topology $\sigma(\ell_\infty,\ell_1)$, there is a subsequence $(n'_\ell)_{\ell\in\N}\subset (n_k)_{k\in\N}$ and an infinite vector $\hat{\blambda}^*\in \B_{\ell_\infty}(0,1)$ such that 
(in particular)
\[\lim_{\ell\to\infty}\hat{\lambda}^{(n'_\ell)}_{(i,j)}\,=\,
\hat{\lambda}^*_{(i,j)}\,,\quad\forall (i,j)\in\N^2\,.\]
Then by the reverse scaling of \eqref{scaling-1} for 
$\hat{\blambda}^*$ 
\begin{equation}
\label{limit-1}
\lim_{\ell\to\infty}\lambda^{(n'_\ell)}_{(i,j)}\,=\,
\lambda^*_{(i,j)}\,;\quad\forall (i,j)\in\N^2\,,\end{equation}
for some infinite vector $\blambda^*=(\lambda^*_{(i,j)})_{(i,j)\in\N^2}$.
In addition, by \eqref{limit-1}, $\M_n(\blambda^*)\succeq0$ for all $n\in\N$, and
\[\lambda^*_{(j,0)}\,=\,\mu_j\,;\quad\lambda^*_{(0,j)}\,=\,
\phi^*(p_j)\,,\quad\forall j\in\N\,,\]
and  by Corollary \ref{coro1},
\[\sum_{j=1}^\infty (\lambda^*_{(0,2j)})^{-1/2j}\,=\,+\infty\,.\]
As $\lambda^*_{(2j,0)}=\mu_{2j}$ for all $j\in\N$, 
and $\mu$ satisfies Carleman's condition, 
then by Theorem \ref{th-carleman},
$\blambda^*$ has a representing measure $\lambda^*$ on $\R^2$, which is moment determinate. This implies that $(\lambda^*,\phi^*)$ is a feasible solution of \eqref{OT-1}.
Finally, as $(\lambda^{(n'_\ell)},\phi^{(n'_\ell)})$ is a minimizing sequence of \eqref{OT-1}, then by \eqref{limit-1},
\[\tau\,=\,\lim_{\ell\to\infty}\rho_{n_\ell}\,=\,\lim_{\ell\to\infty}\int (x-y)^2\,d\lambda^{(n'_\ell)}\,=\,
 \int (x-y)^2\,d\lambda^*\quad\mbox{[by  \eqref{limit-1}]}\,,\]
which shows that
$(\lambda^*,\phi^*)$  an optimal solution of \eqref{OT-1}. 

Finally, in what precedes we have only used the respective moments $\blambda^{(n)}$ and $\bphi^{(n)}$ of the measures
$\lambda^{(n)}$ and $\phi^{(n)}$, and the constraints of \eqref{OT-moments}. Hence by considering a minimizing sequence  $(\lambda^{(n)},\phi^{(n)})$ of \eqref{OT-moments}
instead of \eqref{OT-1}, one reaches the same conclusion.

(ii) \emph{If part:} Straightforward. Indeed if $\mu$ is a Gaussian mixture with $\phi^*$ a mixing measure of parameters $(m,\sigma)\in S$ then $\mu=\nu_{\phi^*}$ and with $\lambda^*:=\mu\otimes\mu$,
the couple $(\lambda^*,\phi^*)$ is a feasible solution of \eqref{OT-1} with value $\tau=0$, hence an optimal solution of \eqref{OT-1}.

\emph{Only if part:} By (i) let $(\lambda^*,\phi^*)$ be an optimal solution of \eqref{OT-1}. As $0=\tau=\int (x-y)^2d\lambda^*$,
it follows that $\mathrm{supp}(\lambda^*)\subset\{(x,x): x\in\R\}$, and therefore $\lambda^*_x=\lambda^*_y$, i.e., $\lambda^*=\mu\otimes\mu$, and therefore
as $\lambda^*_y=\nu_{\phi^*}$, one obtains $\mu=\nu_{\phi^*}$,
the desired result.
\end{proof}

 \subsection{Proof of Theorem \ref{th-hierarchy}}
 \begin{proof}
 (i) Let $(\blambda,\bphi)$ be a feasible solution of \eqref{relax-mom}. As $g_1(m,\sigma)=R^2-m^2-\sigma^2$, the constraint $\M_{n-1}(g_1\cdot\phi)\succeq0$, implies that
\[\phi(\sigma^{2n})\,<\,R^{2n}\,;\quad \phi(m^{2n})\,<\,R^{2n}\,.\]
By \cite[Proposition 3.6, p. 60]{CUP}, this combined with $\M_n(\bphi)\succeq0$, and $\phi_{(0,0)}=1$, yields 
$\vert\phi_{(i,j)}\vert<\max[1,R^{2n}]$ for all $i,j$ with $i+j\leq 2n$. 
Moreover, $\lambda_{2n,0}=\mu_{2n}$, and by \eqref{aux-2j-1},
\[\lambda_{(0,2n)}\,=\,\phi(p_{2n})\,<\,  (2nR)^{2n}\,=:\,\rho_n\,.\]
Again by \cite[Proposition 3.6, p. 60]{CUP}, for all $(i,j)$ with $i+j\leq 2n$,
\[\vert\,\lambda_{(i,j)}\,\vert\,<\,\max[1,\lambda_{(2n,0)},\lambda_{(0,2n)}]\,<\,\max[1,\mu_{2n},\rho_n]\,=:\,\rho'_n\,,\]
which implies that the feasible set of \eqref{relax-mom} in compact, and therefore \eqref{relax-mom} has an optimal solution $(\blambda^{(n)},\bphi^{(n)})$ with value $\lambda^{(n)}((x-y)^2)=\tau_n$, and as \eqref{relax-mom} is a relaxation of \eqref{OT-moments}, $0\leq \tau_n\leq\tau$ for all $n$.

(ii) Complete the finite vector $\blambda^{(n)}$ (resp. $\bphi^{(n)}$) with zeros to make it an infinite sequence $\blambda^{(n)}=(\lambda^{(n)}_{(i,j)})_{(i,j)\in\N^2}$
(resp. $\bphi^{(n)}=(\bphi^{(n)}_{(i,j)})_{(i,j)\in\N^2}$).
Then define the new infinite sequences $\hat{\blambda}^{(n)}$ and
$\hat{\bphi}^{(n)}$ by
\begin{equation}
\label{scaling}
\begin{array}{rcl}
\hat{\lambda}^{(n)}_{(i,j)}&:=&\lambda_{(i,j)}/\rho_k\,,\quad\forall (i,j)\mbox{ with }2k<i+j\leq 2k\,,\quad k=1,\ldots\\
\hat{\phi}^{(n)}_{(i,j)}&:=&\phi_{(i,j)}/R^{2k}\,,\quad\forall (i,j)\mbox{ with }2k<i+j\leq 2k\,,\quad k=1,\ldots,
\end{array}
\end{equation}
so that $\hat{\blambda}^{(n)}$ is an element of the unit ball of $\ell_\infty$, the Banach space of (uniformly) bounded sequences, and smililarly for $\hat{\bphi}^{(n)}$. Again, as the unit ball of $\ell_\infty$ is 
sequentially compact in the weak-star topology $\sigma(\ell_\infty,\ell_1)$, there is a subsequence $(n_k)_{k\in\N}$ and infinite vectors $\hat{\blambda}^*\in\ell_\infty$ and $\hat{\bphi}^*\in\ell_\infty$ such that
\[\lim_{k\to\infty}\hat{\lambda}^{(n_k)}_{(i,j)}\,=\,
\hat{\lambda}^*_{(i,j)}\,;\quad
\lim_{k\to\infty}\hat{\phi}^{(n_k)}_{(i,j)}\,=\,\hat{\phi}^*_{(i,j)}\,,\quad \forall (i,j)\in\N^2\,.\]
Then by the reverse scaling of \eqref{scaling} for $\hat{\blambda}^*$ and $\hat{\bphi}^*$,
\begin{equation}
\label{limit}
\lim_{k\to\infty}\lambda^{(n_k)}_{(i,j)}\,=\,
\lambda^*_{(i,j)}\,;\quad
\lim_{k\to\infty}\phi^{(n_k)}_{(i,j)}\,=\,\phi^*_{(i,j)}\,,\quad \forall (i,j)\in\N^2\,,\end{equation}
for some infinite vectors $\blambda^*$ and $\bphi^*$.
Next, by \eqref{limit}, $\M_n(\blambda^*)\succeq0$,
$\M_n(\bphi^*)\succeq0$, and $\M_n(g_j\cdot\bphi^*)\succeq0$ for all $n$, with
\[\lambda^*_{(j,0)}\,=\,\mu_j\quad\mbox{and}\quad
\lambda^*_{(0,j)}\,=\,\phi^*(p_j)\,,\quad\forall j\,\in\N\,.\]
As $g_1(m,\sigma)=R^2-m^2-\sigma^2$, the quadratic module
\[Q(g)\,=\,\{\,\sum_{j=0}^s \theta_j(m,\sigma)\,g_j(m,\sigma)\::\: \theta_j\in\Sigma[m,\sigma]\,\}\]
is Archimedean and therefore, by Putinar's Positivstellensatz \cite{putinar}, $\bphi^*$ has a representing measure on $S$.
Moreover, as in the proof of Theorem \ref{th-OT}, by Corollary \ref{coro1},
\[\sum_{j=1}^\infty (\lambda^*_{(0,2j)})^{-1/2j}\,=\,+\infty\,,\]
and as $\lambda^*_{(2j,0)}=\mu_{2j}$ for all $j\in\N$, 
and $\mu$ satisfies Carleman's condition, 
then by Theorem \ref{th-carleman},
$\blambda^*$ has a representing measure $\lambda^*$ on $\R^2$, which is moment determinate. 
In particular its marginal $\lambda^*_y$ with moments $(\lambda^*_{(0,j)})_{j\in\N}$ is also moment determinate.
Next, let $\nu_{\phi^*}$ be the measure on $\R$ with Gaussian mixture $\phi^*$. As
\[\lambda^*_{(0,j)}\,=\,\phi^*(p_j)\,=\,\int x^j\,\d\nu_{\phi^*}(x)\,,\quad\forall j\in\N\,,\]
and as $\lambda^*_y$ is moment determinate, this show that $\lambda^*_y=\nu_{\phi^*}$.
Hence $(\phi^*,\lambda^*)$ is feasible for \eqref{OT-1} with value $\lambda^*((x-y)^2)$. In addition, 
as $\tau_{n}\leq\tau$ for all $n$,
\[\tau\,\leq\,\lambda^*(x-y)^2\,=\,\lim_{\ell\to\infty}\lambda^{(n'_\ell)}((x-y)^2)\quad\mbox{([by \eqref{limit}])}\quad\,=\,
\lim_{\ell\to\infty}\tau_{n'_\ell}\,\leq\,\tau\,,\]
so that $\tau=\lambda^*((x-y)^2)$, and therefore $(\lambda^*,\phi^*)$ is an optimal solution of \eqref{OT-1} (and of \eqref{OT-moments} as well).
\end{proof}

 \bibliographystyle{amsplain}
\bibliography{lasserrebib}

\end{document}